\theoremstyle{definition}
\newtheorem{theorem}{Theorem}[section]
\newtheorem{lemma}[theorem]{Lemma}
\numberwithin{equation}{section}
\newtheorem{proposition}[theorem]{Proposition}
\newtheorem{corollary}[theorem]{Corollary}
\newtheorem{example}[theorem]{Example}
\newtheorem{definition}[theorem]{Definition}
\theoremstyle{definition}
\title{Approximate Hamiltonian Symmetry Groups and Recursion Operators for Perturbed Evolution Equations}
\author{
M. Nadjafikhah
\\ \small School of Mathematics, Iran University of Science and Technology,\\ \small Narmak, Tehran, 1684613114, I.R.Iran. e-mail : m\_nadjafikhah@iust.ac.ir
\\  A. Mokhtary
\\ \small Department of Mathematics, Payame Noor University, \\ \small Lashkarak, Tehran,19395-4697, I.R.Iran. e-mail : mokhtary@phd.pnu.ac.ir
}
\begin{document}
\maketitle
\begin{abstract}
In this paper, the method of approximate transformation groups which was proposed by Baikov, Gazizov
and Ibragimov \cite{Baikov1,Baikov2}, is extended on Hamiltonian and bi-Hamiltonian systems of evolution equations.
Indeed, as a main consequence, this extended procedure is applied in order to compute the
approximate conservation laws and approximate recursion operators corresponding to these type of equations.
In particular, as an application, a comprehensive analysis of the problem
of approximate conservation laws and approximate recursion operators associated to the
Gardner equation with the small parameters is presented.
\end{abstract}
%\tableofcontents

\hspace{9pt}\textit{Keywords :} Perturbed evolution equation, Approximate Hamiltonian symmetry group,  Approximate recursion operators
\vspace{.2cm}

\hspace{9pt}2010 Mathematics Subject Classification (MSC2010): 76M60, 35B20, 35Q35
\section{Introduction}
The investigation of the exact solutions of nonlinear evolution equations has a
fundamental role in the nonlinear physical phenomena. One of the significant and
systematic methods for obtaining special solutions of systems of nonlinear
differential equations is the classical symmetries method, also
called Lie group analysis. This well known approach originated at the end of nineteenth century
from the pioneering work of Lie \cite{Lie1}. The fact that symmetry reductions for many PDEs
can not be determined via the classical symmetry method, motivated the creation of several
generalizations of the classical Lie group approach for symmetry reductions. Consequently,
several alternative reduction methods have been proposed, going beyond Lie's classical
procedure and providing further solutions. One of these techniques which is extremely applied
particularly for nonlinear problems is perturbation analysis. It is worth mentioning that sometimes differential equations
which appear in mathematical modelings are presented with terms involving a parameter called the perturbed term.
Because of the instability of the Lie point symmetries with respect to perturbation of coefficients of
differential equations, a new class of symmetries has been created for such equations, which are known as approximate (perturbed)
symmetries.
In the last century, in order to have
the utmost result from the methods, combination of Lie symmetry method and perturbations are investigated
 and two different so called \textit{approximate symmetry methods}(ASM) have been developed.
The first method is due to Baikov, Gazizov and Ibragimov \cite{Baikov1,Baikov2}. The second procedure was
proposed by Fushchich and Shtelen \cite{Fush} and
later followed by Euler et al \cite{Euler1,Euler2}. This method is generally based on the
perturbation of dependent variables. In \cite{Pak,Wilst1}, a comprehensive
comparison of these two methods is presented.\\
As it is well known, Hamiltonian systems of differential equations are one of the famous and
significant concepts in physics. These important systems appear in the various fields of physics such as
motion of rigid bodies, celestial mechanics, quantization theory, fluid mechanics, plasma physics, etc.
Due to the significance of Hamiltonian structures, in this paper, by applying the
linear behavior of the  Euler operator, characteristics, prolongation and Fr\'{e}chet derivative of vector fields, we have extended ASM on the
Hamiltonian and bi-Hamiltonian systems of evolution equations, in order to investigate the interplay between approximate symmetry
groups, approximate conservation laws and approximate recursion operators.\\
The structure of the present paper is as follows: In section 2, some necessary preliminaries
regarding to the Hamiltonian structures are presented. In section 3, a comprehensive investigation of the
approximate Hamiltonian symmetries and approximate conservation laws associated to the perturbed evolution equations
is proposed. Also, as an application of this procedure, approximate Hamiltonian symmetry
groups, approximate bi-Hamiltonian Structures and  approximate conservation laws of the Gardner equation
are computed. In section 4, the approximate recursion operators are studied and the proposed technique is implemented
for the Gardner equation as an application. Finally, some concluding remarks are mentioned at the end of the paper.
%%%%%%%%%%%%%%%%%%%%%%%%%%%%%%%%%%%%%%%%%%%%%%%55
%%%%%%%%%%%%%%%%%%%%%%%%%%%%%%%%%%%%%%%%%%%%%%%%%5555
%%%%%%%%%%%%%%%%%%%%%%%%%%%%%%%%%%%%%%%%%%%%%%%%%%%%%%5555555555555
\section{Preliminaries}
In this section, we will mention some necessary preliminaries regarding to
Hamiltonian structures. In order to be familiar with the general concepts of the ASM, refer
to \cite{Ibra}. It is also worth mentioning that most of this paper's
definitions, theorems and techniques regarding to  Hamiltonian and bi-Hamiltonian structures are inspired
from \cite{Olver1}.\\
Let $M \subset X \times U $  denote a fixed connected open subset of the space of
independent and dependent variables $x=(x^1,\cdots,x^p)$ and $u=(u^1,\cdots,u^q)$.
The algebra of differential functions $P(x,u^{(n)})=p[u]$ over $M$
is denoted by $\mathscr{A}$. We further
define $\mathscr{A}^l$ to be the vector space of $l$-tuples of differential functions, $P[u] =
(P_1[u],\ldots,P_l[u])$, where each $P_j \in \mathscr{A}$.\\
A \textit{generalized vector field} will be a (formal) expression of the
form
\begin{equation}
\label{eq:eq12}
\mathbf{v}=\sum_{i=1}^p\xi^i[u]\frac{\partial}{\partial x^i}+\sum_{\alpha=1}^q\phi_\alpha[u]\frac{\partial}{\partial u^\alpha}
\end{equation}
in which $\xi^i$ and $\phi_\alpha$ are smooth differential functions. The  \textit{Prolonged} generalized vector field
can be defined as follows:
\[
\mathbf{pr^{(n)}}\,\mathbf{v}=\mathbf{v}+\sum_{\alpha=1}^q\,\sum_{\sharp J\leq n}\,\phi_\alpha^J[u]\frac{\partial}{\partial u_J^\alpha},
\]
whose coefficients are determined by the formula
\begin{equation}
\label{eq:eq13}
\phi_\alpha^J=D_J\Big(\phi_\alpha-\sum_{i=1}^p\xi^iu_i^\alpha \Big)+\sum_{i=1}^p\xi^iu_{J,i}^\alpha ,
\end{equation}
with the same notation as before. Given a generalized vector field $\mathbf{v}$, its \textit{infinite prolongation} (or briefly \textit{prolongation}) is the formally infinite sum
\begin{equation}
\label{eq:eq14}
\mathbf{pr\,v}=\sum_{i=1}^p\, \xi^i\frac{\partial}{\partial x^i}+\sum_{\alpha=1}^q\,\sum_J \phi_\alpha^J\frac{\partial}{\partial u_J^\alpha},
\end{equation}
where each $\phi_\alpha^J$ is given by (\ref{eq:eq13}), and the sum in (\ref{eq:eq14}) now extends over
all multi-indices $J=(j_1,\ldots,j_k)$ for $k\geq 0,\, 1\leq j_k\leq p$.

A generalized vector field v is a \textit{generalized infinitesimal
symmetry} of a system of differential equations
\[
\Delta_\nu [u]=\Delta_\nu(x,u^{(n)})=0,\qquad \nu=1,\ldots,l,
\]
if and only if
\begin{equation}
\label{eq:eq15}
\mathbf{pr}\,\mathbf{v}[\Delta_\nu]=0,\qquad \nu=1,\ldots,l,
\end{equation}
for every smooth solution $u=f(x)$.

Among all the generalized vector fields, those in which the coefficients $\xi^i[u]$
of the $\frac{\partial}{\partial x^i}$ are zero play a distinguished role. Let $Q[u]=(Q_1[u],\ldots,Q_q[u])\in \mathscr{A}^q$ be a $q$-tuple of differential
functions. The generalized vector field
\[
\mathbf{v}_Q=\sum_{\alpha=1}^q Q_\alpha [u]\frac{\partial}{\partial u^\alpha}
\]
is called an \textit{evolutionary vector field}, and $Q$ is called its \textit{characteristic}.

A manifold $M$ with a Poisson bracket is called a \textit{Poisson manifold}, the
bracket defining a \textit{Poisson structure} on $M$. Let $M$ be a Poisson manifold and $H:\,M\to\mathbb{R}$  be a smooth
function. The \textit{Hamiltonian vector field} associated with $H$ is the unique smooth
vector field $\mathbf{\hat{v}_H}$ on $M$ satisfying the following identity
\begin{equation}
\label{eq:eq18}
\mathbf{\hat{v}_H}=\{F,H\}=-\{H,F\}
\end{equation}
for every smooth function $F:\,M\to\mathbb{R}$. The equations governing the flow of  $\mathbf{\hat{v}_H}$
are referred to as \textit{Hamilton's equations} for the ``Hamiltonian" function $H$.

Let $x = (x^1,\ldots,x^m)$ be local
coordinates on $M$ and $H(x)$ be a real-valued function. The following basic formula
can be obtained for the Poisson bracket.
\begin{equation}
\label{eq:eq20}
\{F,H\} = \sum_{i=1}^m\sum_{j=1}^m\{x^i,x^j\}\frac{\partial F}{\partial x^i}\frac{\partial H}{\partial x^j}
\end{equation}
In other words, in order to compute the Poisson bracket of
any pair of functions in some given set of local coordinates, it suffices to know
the Poisson brackets between the coordinate functions themselves. These
basic brackets,
\begin{equation}
\label{eq:eq21}
J^{ij}(x)=\{x^i,x^j\},\qquad i,j=1,\ldots,m,
\end{equation}
are called the \textit{structure functions} of the Poisson manifold $M$ relative to the
given local coordinates, and serve to uniquely determine the Poisson
structure itself. For convenience, we assemble the structure functions into a skew-
symmetric $m \times m$ matrix $J(x)$, called the \textit{structure matrix} of $M$. Using $\nabla H$ to
denote the (column) gradient vector for $H$, the local coordinate form  (\ref{eq:eq20}) for
the Poisson bracket can be written as
\begin{equation}
\label{eq:eq22}
\{F,H\} =\nabla F\cdot J\nabla H.
\end{equation}

Therefore, in the given coordinate chart, Hamilton's equations take the
form
\begin{equation}
\label{eq:eq24}
\frac{dx}{dt}=J(x)\nabla H(x).
\end{equation}
Alternatively, using (\ref{eq:eq20}), we could write this in the ``bracket form" as follows:
\[
\frac{dx}{dt}=\{x,H\},
\]
the $i$-th component of the right-hand side being $\{x^i,H\}$. A system of first
order ordinary differential equations is said to be a \textit{Hamiltonian system}
if there is a Hamiltonian function $H(x)$ and a matrix of functions $J(x)$
determining a Poisson bracket (\ref{eq:eq22}) whereby the system takes the form
(\ref{eq:eq24}).

If
\[
\mathscr{D}=\sum_J\,P_J[u]D_J,\qquad P_J\in\mathscr{A}
\]
is a differential operator, its (formal)\textit{adjoint}  is the differential operator $\mathscr{D}^*$
which satisfies
\[
\int_{\Omega}\,P\cdot\mathscr{D}Q\,dx=\int_\Omega\,Q\cdot\mathscr{D}^*P\,dx
\]
for every pair of differential functions $P,Q\in\mathscr{A}$ which vanish when $u\equiv 0$. Also, for every domain $\Omega\subset \mathbf{R}^p$ and every function $u=f(x)$ of compact support in $\Omega$. An operator
$\mathscr{D}
$ is \textit{self-adjoint} if $\mathscr{D}^*=\mathscr{D}$; it is \textit{skew-adjoint} if $\mathscr{D}^*=-\mathscr{D}$.

The principal
innovations needed to convert a Hamiltonian system of ordinary differential
equations (\ref{eq:eq24}) to a Hamiltonian system of evolution equations are as follows (refer to \cite{Olver1} for more details):
\begin{itemize}
\item[(i)] replacing the Hamiltonian function $H(x)$ by a Hamiltonian functional $\mathscr{H}[u]$,
\item[(ii)]replacing the vector gradient operation $\nabla H$ by the variational derivative $\delta\mathscr{H}$ of the Hamiltonian functional, and
\item[(iii)] replacing the skew-symmetric matrix $J(x)$ by a skew-adjoint differential
operator $\mathscr{D}$ which may depend on u.
\end{itemize}
The resulting Hamiltonian system will take the form
\[
\frac{\partial u}{\partial t}=\mathscr{D}\cdot\delta\mathscr{H}[u]
\]
Clearly, for
a candidate Hamiltonian operator $\mathscr{D}$ the correct expression for the
corresponding Poison bracket has the form
\begin{equation}
\label{eq:eq27}
\{\mathscr{P},\mathscr{L}\}=\int\,\delta\mathscr{P}\cdot\mathscr{D}\delta\mathscr{L}\,dx,
\end{equation}
whenever  $\mathscr{P},\mathscr{L}\in\mathscr{F}$ are functionals. Off course, the Hamiltonian operator $\mathscr{D}$
must satisfy certain further restrictions in order that (\ref{eq:eq27}) be a true Poisson
bracket. A linear operator $\mathscr{D}:\mathscr{A}^q\to\mathscr{A}^q$ is called \textit{Hamiltonian} if its
Poisson bracket (\ref{eq:eq27}) satisfies the conditions of \textit{skew-Symmetry}
and the \textit{Jacobi identity}.
\begin{proposition}
Let $\mathscr{D}$ be a Hamiltonian operator with Poisson bracket (\ref{eq:eq27}).
To each functional $\mathscr{H}=\int\,H\,dx\in\mathscr{F}$,  there is an evolutionary vector field $\mathbf{pr\,\hat{v}_\mathscr{H}}$,
called the Hamiltonian vector field associated with $\mathscr{H}$, which for all functionals $\mathscr{P}\in\mathscr{F}$ satisfies the following identity:
\begin{equation}
\label{eq:eq30}
\mathbf{pr\,\hat{v}_\mathscr{H}}(\mathscr{P})=\{\mathscr{P},\mathscr{H}\}
\end{equation}
Indeed, $\mathbf{\hat{v}_\mathscr{H}}$  has characteristic $\mathscr{D}\delta\mathscr{H}=\mathscr{D}\mathbf{E}(H)$, in which $\mathbf{E}$ is Euler operator.
(Proposition 7.2 of \cite{Olver1})
\end{proposition}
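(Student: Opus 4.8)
The plan is to produce the evolutionary vector field explicitly by prescribing its characteristic to be $Q=\mathscr{D}\,\delta\mathscr{H}=\mathscr{D}\,\mathbf{E}(H)$, and then to verify the defining identity (\ref{eq:eq30}) by a direct computation resting on a single bridging lemma: the action of \emph{any} evolutionary vector field on a functional equals the integral pairing of its characteristic against the variational derivative of that functional. Once this lemma is in hand, both the existence of $\mathbf{\hat{v}_\mathscr{H}}$ and the stated form of its characteristic drop out by substitution into the definition (\ref{eq:eq27}) of the Poisson bracket, while uniqueness follows from the nondegeneracy of that pairing.

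First I would establish the bridging lemma. For an evolutionary field $\mathbf{v}_Q=\sum_\alpha Q_\alpha\,\partial/\partial u^\alpha$ one has $\xi^i\equiv 0$, so the prolongation formula (\ref{eq:eq13}) collapses to $\phi_\alpha^J=D_J Q_\alpha$, and hence by (\ref{eq:eq14})
\[
\mathbf{pr}\,\mathbf{v}_Q(P)=\sum_{\alpha=1}^q\sum_J (D_J Q_\alpha)\,\frac{\partial P}{\partial u_J^\alpha}
\]
for every differential function $P\in\mathscr{A}$. Integrating over the domain and transferring each $D_J$ off $Q_\alpha$ by repeated integration by parts — discarding the boundary contributions exactly as in the definition of the formal adjoint — I would obtain
\[
\mathbf{pr}\,\mathbf{v}_Q(\mathscr{P})=\int \sum_{\alpha=1}^q Q_\alpha \sum_J (-D)_J\frac{\partial P}{\partial u_J^\alpha}\,dx=\int Q\cdot\mathbf{E}(P)\,dx=\int Q\cdot\delta\mathscr{P}\,dx,
\]
the inner sum being precisely the $\alpha$-th component of the Euler operator, i.e.\ the variational derivative $\delta\mathscr{P}$.

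With the lemma in place, I would define $\mathbf{\hat{v}_\mathscr{H}}$ to be the evolutionary vector field whose characteristic is $Q:=\mathscr{D}\,\delta\mathscr{H}=\mathscr{D}\,\mathbf{E}(H)$. Substituting this $Q$ into the lemma yields, for every functional $\mathscr{P}\in\mathscr{F}$,
\[
\mathbf{pr}\,\mathbf{\hat{v}_\mathscr{H}}(\mathscr{P})=\int \delta\mathscr{P}\cdot\mathscr{D}\,\delta\mathscr{H}\,dx=\{\mathscr{P},\mathscr{H}\},
\]
which is exactly (\ref{eq:eq30}), the last equality being the definition (\ref{eq:eq27}). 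Uniqueness of the characteristic follows from the same lemma: if two evolutionary fields with characteristics $Q$ and $\tilde{Q}$ both satisfy (\ref{eq:eq30}), then $\int (Q-\tilde{Q})\cdot\delta\mathscr{P}\,dx=0$ for all $\mathscr{P}$; testing against the variational derivatives of linear functionals $\int \zeta(x)u^\beta\,dx$ (whose $\delta$ reproduces an arbitrary test function in each slot) forces $Q=\tilde{Q}$.

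The step I expect to be the main obstacle is the integration-by-parts identity of the bridging lemma together with its well-definedness. Because functionals in $\mathscr{F}$ are equivalence classes of differential functions modulo total divergences, I must check that $\mathbf{pr}\,\mathbf{v}_Q(\mathscr{P})$ depends only on the class of $P$ and not on the chosen representative — this is guaranteed since the Euler operator annihilates total divergences — and that the discarded boundary terms genuinely vanish under the standing compact-support hypotheses inherited from the definition of the adjoint. Controlling these boundary contributions and the divergence ambiguity, rather than the algebra of the final substitution, is where the real care is required.
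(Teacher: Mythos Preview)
Your argument is correct and is essentially the standard proof of this result as given in Olver's book. Note, however, that the paper does not supply its own proof of this proposition: it is stated in the preliminaries section as a quotation of Proposition~7.2 of \cite{Olver1}, with the citation serving in lieu of proof. There is therefore no ``paper's proof'' to compare against beyond the reference to Olver, and your derivation --- establishing $\mathbf{pr}\,\mathbf{v}_Q(\mathscr{P})=\int Q\cdot\delta\mathscr{P}\,dx$ by integration by parts and then specializing to $Q=\mathscr{D}\,\delta\mathscr{H}$ --- is exactly the route taken there.
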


\section{Approximate Hamiltonian Symmetries and Approximate\\ Conservation Laws}

Consider a system of perturbed evolution equations
\begin{equation}
\label{eq:eq31}
\frac{\partial u}{\partial t}=P[u,\varepsilon]
\end{equation}
in which $P[u,\varepsilon]=P(x,u^{(n)},\varepsilon)\in \mathscr{A}^q$,
$x\in \mathbb{R}^p$, $u\in\mathbb{R}^q$ and $\varepsilon$ is a parameter.

Substituting according to (\ref{eq:eq31}) and its derivatives, we see that
any evolutionary symmetry must be equivalent to one whose characteristic
$Q[u,\varepsilon]=Q(x,t,u^{(m)},\varepsilon)$ depends only on $x$, $t$, $u$, $\varepsilon$ and the $x$-derivatives of $u$. On
the other hand, (\ref{eq:eq31}) itself can be considered as the equations corresponding to the flow
$\exp(t\mathbf{v}_p)$ of the evolutionary vector field with characteristic $P$.
The symmetry
criterion (\ref{eq:eq15}), which in this case is
\begin{equation}
\label{eq:eq32}
D_t\,Q_\nu=\mathbf{pr\,v}_Q(P_\nu)+o(\varepsilon^p),\qquad\nu=1,\ldots,q,
\end{equation}
can be readily seen to be equivalent to the following Lie bracket condition on
the two approximate generalized vector fields. Indeed, this point generalizes  the correspondence between
symmetries of systems of first order perturbed ordinary differential equations and the
Lie bracket of the corresponding vector fields.

Considering above assumptions, some useful relevant theorems and definitions
could be rewritten as follows:
\begin{proposition} An approximate evolutionary vector field  $\mathbf{v}_Q$ is a symmetry of the system
of perturbed evolution equations $u_t=P[u,\varepsilon]$ if and only if
\begin{equation}
\label{eq:eq33}
\frac{\partial \mathbf{v}_Q}{\partial t}+[\mathbf{v}_P,\mathbf{v}_Q]=o(\varepsilon^p)
\end{equation}
holds identically in $(x,t,u^{(m)},\varepsilon)$. (Here $\partial\mathbf{v}_Q/\partial t$ denotes the evolutionary vector field with characteristic $\partial Q/\partial t$.)
\end{proposition}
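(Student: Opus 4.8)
The plan is to establish the equivalence between the symmetry criterion (\ref{eq:eq32}) and the Lie bracket condition (\ref{eq:eq33}) by computing both sides explicitly in terms of prolongations of evolutionary vector fields and comparing them modulo $o(\varepsilon^p)$. Since the characteristic $Q[u,\varepsilon]$ depends on $t$ through the explicit $t$-dependence as well as through $u$, the key is to carefully track how the total $t$-derivative $D_t Q$ decomposes. First I would recall that for an evolutionary vector field $\mathbf{v}_Q$ with characteristic $Q$, its prolongation acts on a differential function $R[u]$ by $\mathbf{pr\,v}_Q(R)=\sum_{\alpha,J} D_J(Q_\alpha)\,\partial R/\partial u^\alpha_J$, which is precisely the Fréchet derivative $R'[Q]$ applied to $Q$.

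**Next I would** unwind the total derivative on the left-hand side of (\ref{eq:eq32}). Because $Q=Q(x,t,u^{(m)},\varepsilon)$ carries explicit $t$-dependence, the chain rule gives
\begin{equation}
\label{eq:planDt}
D_t Q_\nu = \frac{\partial Q_\nu}{\partial t} + \sum_{\alpha,J} \frac{\partial Q_\nu}{\partial u^\alpha_J}\, D_J(u^\alpha_t).
\end{equation}
Substituting the flow equation $u_t=P[u,\varepsilon]$ and its $x$-derivatives $u^\alpha_{t,J}=D_J P_\alpha$ (valid on solutions, which is exactly the setting of the symmetry criterion), the second term becomes $\sum_{\alpha,J}(\partial Q_\nu/\partial u^\alpha_J)\,D_J(P_\alpha)=\mathbf{pr\,v}_P(Q_\nu)$. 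Thus (\ref{eq:planDt}) reads $D_t Q_\nu = \partial Q_\nu/\partial t + \mathbf{pr\,v}_P(Q_\nu)$, and the symmetry criterion (\ref{eq:eq32}) rearranges to
\begin{equation}
\label{eq:planrearr}
\frac{\partial Q_\nu}{\partial t} + \mathbf{pr\,v}_P(Q_\nu) - \mathbf{pr\,v}_Q(P_\nu) = o(\varepsilon^p).
\end{equation}

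**Then I would** identify the middle two terms with the Lie bracket. The standard commutator formula for evolutionary vector fields states that $[\mathbf{v}_P,\mathbf{v}_Q]=\mathbf{v}_R$ is again evolutionary, with characteristic $R=\mathbf{pr\,v}_P(Q)-\mathbf{pr\,v}_Q(P)$; this follows from computing the commutator of the two prolonged derivations and using that prolongation is a Lie algebra homomorphism. Matching characteristics componentwise, the left side of (\ref{eq:planrearr}) is exactly the characteristic of $\partial\mathbf{v}_Q/\partial t+[\mathbf{v}_P,\mathbf{v}_Q]$, so the vanishing of that characteristic up to $o(\varepsilon^p)$ is equivalent to (\ref{eq:eq33}). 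The equivalence then holds because an evolutionary vector field is $o(\varepsilon^p)$ precisely when its characteristic is.

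**The main obstacle** I expect is justifying the perturbative bookkeeping rigorously: in the approximate setting one works modulo $o(\varepsilon^p)$, so every identity above must be read as an equality of power series in $\varepsilon$ truncated at order $p$. I would need to verify that the commutator formula for the Lie bracket and the homomorphism property of prolongation survive this truncation—that is, that composing and bracketing approximate vector fields does not generate lower-order error terms that spoil the correspondence. Since all operations involved (Fréchet derivatives, total derivatives, and the bracket) are $\mathbb{R}[\varepsilon]$-linear and preserve the filtration by powers of $\varepsilon$, this reduces to checking that the classical (exact) identities hold order-by-order, which they do by linearity; the explicit verification of the commutator formula is the one genuinely computational step and is the place where care is required.
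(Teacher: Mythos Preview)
Your argument is correct and is precisely the standard proof of the classical version of this proposition (Proposition~5.19 in Olver \cite{Olver1}), carried over to the approximate setting with the obvious modification that all identities are read modulo $o(\varepsilon^p)$. The paper itself does not supply a proof: the proposition is stated immediately after the sentence ``some useful relevant theorems and definitions could be rewritten as follows'' and is treated as a routine restatement of the known exact result, with the justification implicitly deferred to \cite{Olver1}. So there is nothing to compare against beyond noting that your proof is exactly the one the authors are tacitly invoking.
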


Any approximate conservation law of a system of perturbed evolution
equations takes the form
\begin{equation}
\label{eq:eq34}
D_tT+\mathbf{Div}\,X=o(\varepsilon^p),
\end{equation}
in which $\mathbf{Div}$ denotes spatial divergence. Without loss of generality,
the conserved density $T(x,t,u^{(n)},\varepsilon)$ can be assumed
to depend only on $x$-derivatives of
$u$. Equivalently, for $\Omega\subset X$, the functional
\[
\mathscr{T}[t;u,\varepsilon]=\int_\Omega T(x,t,u^{(n)},\varepsilon)\,dx
\]
is a constant, independent of $t$, for all solutions $u$ such that $T(x,t,u^{(n)},\varepsilon)\to 0$ as
$x\to\partial \Omega$.
Note that if $T(x,t,u^{(n)},\varepsilon)$ is any such differential function, and $u$ is a solution
of the perturbed evolutionary system $u_t=P[u,\varepsilon]$, then
\[
D_tT\approx\partial_tT+ \mathbf{pr\,v}_p(T),
\]
where $\partial_t=\partial/\partial t$ denotes the partial $t$-derivative. Thus $T$ is the density for a
conservation law of the system if and only if its associated functional
$\mathscr{T}$ satisfies the following identity
\begin{equation}
\label{eq:eq35}
\partial\mathscr{T}/\partial t+\mathbf{pr\,v}_p(\mathscr{T})=o(\varepsilon^p).
\end{equation}
In the case that our system is of Hamiltonian form, the bracket relation (\ref{eq:eq30})
immediately leads to the Noether relation between approximate Hamiltonian symmetries
and approximate conservation laws.

\begin{definition}
Let $\mathscr{D}$ be a $q\times q$ approximate Hamiltonian differential operator. An
\textit{approximate distinguished functional} for $\mathscr{D}$ is a functional  $\mathscr{G}\in \mathscr{F}$  satisfying $\mathscr{D}\delta \mathscr{G}=o(\varepsilon^p)$  for all $x,u$.
\end{definition}
In other words, the Hamiltonian system corresponding to a distinguished
functional is completely trivial: $u_t = 0$.

Now, according to \cite{Olver1},  the perturbed Hamiltonian version of Noether's theorem can be presented as follows:
\begin{theorem}
\label{T:thm3}
 Let $u_t=\mathscr{D}\delta\mathscr{H}$ be a Hamiltonian system of perturbed evolution equations.
An approximate Hamiltonian vector field $\mathbf{\hat{v}}_\mathscr{P}$ with characteristic $ \mathscr{D}\delta\mathscr{P},\mathscr{P}\in\mathscr{F}$, determines an
approximate generalized symmetry group of the system if and only if there is an equivalent
functional $\tilde{\mathscr{P}}\approx\mathscr{P}-\mathscr{G}$ differing only from $\mathscr{p}$ by a time-dependent  approximate
distinguished functional $\mathscr{G}[t;u,\varepsilon]$, such that $\tilde{\mathscr{P}}$ determines an approximate conservation law.
\end{theorem}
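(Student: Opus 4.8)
The plan is to collapse both hypotheses—``$\mathbf{\hat v}_{\mathscr P}$ is an approximate symmetry'' and ``$\tilde{\mathscr P}$ determines an approximate conservation law''—into statements about a single \emph{defect functional}
\[
\mathscr{R}[t;u,\varepsilon]:=\frac{\partial\mathscr{P}}{\partial t}+\{\mathscr{P},\mathscr{H}\}.
\]
First I would rewrite the conservation-law criterion (\ref{eq:eq35}): since the system is $u_t=P[u,\varepsilon]$ with $P=\mathscr{D}\delta\mathscr{H}$, the evolutionary field $\mathbf{v}_P$ coincides with $\mathbf{\hat v}_{\mathscr H}$ (same characteristic $\mathscr D\delta\mathscr H$), so (\ref{eq:eq30}) gives $\mathbf{pr\,v}_P(\mathscr P)=\mathbf{pr\,\hat v}_{\mathscr H}(\mathscr P)=\{\mathscr P,\mathscr H\}$. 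Hence $\mathscr{P}$ determines an approximate conservation law if and only if $\mathscr{R}=o(\varepsilon^p)$. Next I would rewrite the symmetry criterion (\ref{eq:eq33}): since $\mathscr D$ is $t$-independent we have $\partial\mathbf{\hat v}_{\mathscr P}/\partial t=\mathbf{\hat v}_{\partial\mathscr P/\partial t}$, and using the Lie-algebra anti-homomorphism $[\mathbf{\hat v}_{\mathscr H},\mathbf{\hat v}_{\mathscr P}]=\mathbf{\hat v}_{\{\mathscr P,\mathscr H\}}+o(\varepsilon^p)$—which follows from the approximate Jacobi identity for (\ref{eq:eq27}) together with linearity of $\mathscr{P}\mapsto\mathbf{\hat v}_{\mathscr P}$—the criterion becomes $\mathbf{\hat v}_{\mathscr R}=o(\varepsilon^p)$, i.e. $\mathscr{D}\delta\mathscr{R}=o(\varepsilon^p)$. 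Thus the theorem reduces to: \emph{$\mathscr R$ is approximately distinguished if and only if $\mathscr R$ can be made $o(\varepsilon^p)$ by subtracting from $\mathscr P$ a time-dependent approximate distinguished functional.}

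A short lemma drives both directions: if $\mathscr G$ is approximately distinguished, $\mathscr{D}\delta\mathscr{G}=o(\varepsilon^p)$, then by (\ref{eq:eq27}) and skew-symmetry $\{\mathscr G,\mathscr Q\}=-\int\delta\mathscr Q\cdot\mathscr D\delta\mathscr G\,dx=o(\varepsilon^p)$ for \emph{every} functional $\mathscr Q$; in particular $\{\mathscr G,\mathscr H\}=o(\varepsilon^p)$. For the ``if'' direction I would start from $\tilde{\mathscr P}=\mathscr P-\mathscr G$ with $\mathscr G$ distinguished and $\tilde{\mathscr P}$ conserved, so the associated defect $\tilde{\mathscr R}=\partial\tilde{\mathscr P}/\partial t+\{\tilde{\mathscr P},\mathscr H\}=o(\varepsilon^p)$. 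Writing $\mathscr R=\tilde{\mathscr R}+\partial\mathscr G/\partial t+\{\mathscr G,\mathscr H\}$ and applying $\mathscr D\delta$, the three summands are $o(\varepsilon^p)$ respectively because $\tilde{\mathscr R}=o(\varepsilon^p)$, because $\mathscr D\delta$ commutes with $\partial_t$ so $\mathscr D\delta(\partial\mathscr G/\partial t)=\partial_t(\mathscr D\delta\mathscr G)=o(\varepsilon^p)$, and because $\{\mathscr G,\mathscr H\}=o(\varepsilon^p)$ by the lemma. Hence $\mathscr D\delta\mathscr R=o(\varepsilon^p)$, which is the symmetry condition; and since $\mathscr D\delta\tilde{\mathscr P}=\mathscr D\delta\mathscr P+o(\varepsilon^p)$, the fields $\mathbf{\hat v}_{\tilde{\mathscr P}}$ and $\mathbf{\hat v}_{\mathscr P}$ agree to order $\varepsilon^p$, so it is indeed $\mathbf{\hat v}_{\mathscr P}$ that is the symmetry.

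For the ``only if'' direction I would assume $\mathbf{\hat v}_{\mathscr P}$ is a symmetry, so $\mathscr R$ is approximately distinguished, and then \emph{construct} the required $\mathscr G$ by integrating the defect in time,
\[
\mathscr{G}[t;u,\varepsilon]:=\int_0^t \mathscr{R}[s;u,\varepsilon]\,ds .
\]
Because $\delta$ and $\mathscr D$ act in the $u$-jet variables they commute with $\int_0^t(\cdot)\,ds$, giving $\mathscr D\delta\mathscr G=\int_0^t\mathscr D\delta\mathscr R\,ds=o(\varepsilon^p)$, so $\mathscr G$ is a genuine time-dependent approximate distinguished functional, while $\partial\mathscr G/\partial t=\mathscr R$. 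Setting $\tilde{\mathscr P}=\mathscr P-\mathscr G$ and using the lemma once more, $\tilde{\mathscr R}=\mathscr R-\partial\mathscr G/\partial t-\{\mathscr G,\mathscr H\}=-\{\mathscr G,\mathscr H\}=o(\varepsilon^p)$, so $\tilde{\mathscr P}$ determines an approximate conservation law, as required. The step I expect to be the main obstacle is the passage from (\ref{eq:eq33}) to $\mathscr D\delta\mathscr R=o(\varepsilon^p)$: one must verify that $\mathscr P\mapsto\mathbf{\hat v}_{\mathscr P}$ is a Lie-algebra homomorphism \emph{up to} $o(\varepsilon^p)$, which rests on the approximate Jacobi identity and on checking that the $\varepsilon$-truncation does not spoil the bracket identities. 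A secondary point to watch is that the time-integration defining $\mathscr G$ preserves the order of the error uniformly in $t$, so that $\mathscr G$ remains distinguished to order $\varepsilon^p$ on the interval under consideration.
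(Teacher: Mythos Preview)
The paper does not actually prove this theorem: it is stated as ``the perturbed Hamiltonian version of Noether's theorem'' with a citation to Olver and is followed immediately by the Gardner example, with no \texttt{proof} environment in between. There is therefore nothing in the paper to compare your argument against.

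That said, your proposal is the correct one: it is precisely the standard proof of the exact Noether--Hamiltonian theorem (Olver, Theorem~7.15) transported into the approximate setting by carrying the $o(\varepsilon^p)$ error through each step. The reduction of both the conservation-law criterion~(\ref{eq:eq35}) and the symmetry criterion~(\ref{eq:eq33}) to statements about the single defect functional $\mathscr{R}=\partial_t\mathscr{P}+\{\mathscr{P},\mathscr{H}\}$, the identification of ``$\mathbf{\hat v}_{\mathscr P}$ is an approximate symmetry'' with ``$\mathscr{R}$ is approximately distinguished,'' and the construction $\mathscr{G}=\int_0^t\mathscr{R}\,ds$ for the ``only if'' direction are exactly the classical moves. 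The two points you single out as obstacles---that $\mathscr P\mapsto\mathbf{\hat v}_{\mathscr P}$ is a Lie-algebra anti-homomorphism up to $o(\varepsilon^p)$, and that the time integral preserves the order of the error---are genuine but routine: the first is the approximate Jacobi identity for~(\ref{eq:eq27}) plus linearity of $\mathscr D\delta$, and the second holds because $\mathscr D$ and $\delta$ act only in the jet variables and commute with $\int_0^t(\cdot)\,ds$. Your implicit assumption that $\mathscr D$ is $t$-independent (needed for $\partial_t\mathbf{\hat v}_{\mathscr P}=\mathbf{\hat v}_{\partial_t\mathscr P}$) is standard in this context and consistent with every example in the paper.
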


\begin{example}
The Gardner equation
\[
u_t=6(u+\varepsilon u^2)u_x-u_{xxx},
\]
can in fact be written in Hamiltonian form in two distinct ways. Firstly, we
see
\[
u_t=D_x(3u^2+2\varepsilon u^3-u_{xx})=\mathscr{D}\delta\mathscr{H}_1,
\]
where $\mathscr{D}=D_x$ and
\[
\mathscr{H}_1[u,\varepsilon]=\int(u^3+\frac{\varepsilon}{2}u^4+\frac{u_x^2}{2})\,dx
\]
is an approximate conservation law. Note that $\mathscr{D}$ is certainly skew-adjoint, and  Hamiltonian. The
Poisson bracket is
\[
\{\mathscr{P},\mathscr{L}\}=\int\delta\mathscr{P}\cdot D_x(\delta\mathscr{L})dx
\]
The second Hamiltonian form is
\[
u_t=\Big(4uD_x+2u_x+3\varepsilon(uu_x+u^2D_x)-D_x^3\Big)u=\mathscr{E}\delta\mathscr{H}_0,
\]
in which
\[
\mathscr{H}_0[u,\varepsilon]=\int\frac{1}{2}u^2\,dx
\]
$\mathscr{E}$ is skew-adjoint and  satisfies  the Jacobi identity. Therefore it is Hamiltonian.

In \cite{Nadj1}, we have comprehensively analyzed the problem of approximate symmetries for the Gardner equation.
We have shown that the approximate symmetries of the Gardner equation are given by the following generators:
\[
\mathbf{v}_1=\partial_x,\qquad \mathbf{v}_2=\partial_t,\qquad
\mathbf{v}_3=6t\partial_x+(2\varepsilon u-1)\partial_u,
\]
\[
\mathbf{v}_4=\varepsilon \mathbf{v}_1,\qquad \mathbf{v}_5=\varepsilon \mathbf{v}_2,\qquad \mathbf{v}_6=\varepsilon(6t\partial_x-\partial_u)=\varepsilon \mathbf{v}_3,\qquad
\mathbf{v}_7=\varepsilon(x\partial_x+3t\partial_t-2u\partial_u),
\]
with corresponding characteristics
\[
Q_1=u_x,\qquad Q_2=6(u+\varepsilon u^2)u_x-u_{xxx},\qquad Q_3=6tu_x+1-2\varepsilon u
\]
\[
Q_4=\varepsilon Q_1=\varepsilon u_x,\qquad Q_5=\varepsilon Q_2=\varepsilon(6uu_x-u_{xxx}),\qquad Q_6=\varepsilon Q_3=\varepsilon(6tu_x+1)\]
\[
 Q_7=\varepsilon\Big(2u+xu_x+3t(6uu_x-u_{xxx})\Big),
\]
(up to sign).

For the first Hamiltonian operator $\mathscr{D}=D_x$, there is one independent
nontrivial approximate distinguished functional, the mass $\mathscr{P}_0=\mathcal{M}=\int \,u\,dx$ which is
consequently approximately conserved.

For the above seven characteristics, we have
\begin{equation}
\label{eq:eq36}
Q_i\approx D_x\delta\mathscr{P}_i,\qquad i=1,2,4,5,6,
\end{equation}
with the following approximately conserved functionals:
\[
\mathscr{P}_1=\mathscr{H}_0[u,\varepsilon]=\int\frac{1}{2}u^2\,dx,\qquad\mathscr{P}_2=\mathscr{H}_1[u,\varepsilon]=\int\left(u^3+\frac{\varepsilon}{2}u^4+\frac{1}{2}u_x^2\right)\,dx,\qquad\mathscr{P}_4=\varepsilon\mathscr{P}_1=\int\frac{\varepsilon}{2}u^2\,dx,
\]
\[
\mathscr{P}_5=\varepsilon\mathscr{P}_2=\varepsilon\int\left(u^3+\frac{1}{2}u_x^2\right)\,dx,\qquad\mathscr{P}_6=\varepsilon\int\left(3tu^2+xu\right)\,dx.
\]
For the second Hamiltonian operator $\mathscr{E}=4uD_x+2u_x+3\varepsilon(uu_x+u^2D_x)-D_x^3$,

\begin{equation}
\label{eq:eq37}
Q_i\approx\mathscr{E}\delta\mathscr{\tilde{P}}_i,\qquad i=2,4,5,7,
\end{equation}
the following approximately conserved functionals are the corresponding approximate conservation laws:
\[
\tilde{\mathscr{P}_2}=\mathscr{P}_1=\int\frac{1}{2}u^2\,dx,\qquad\mathscr{\tilde{P}}_4=\frac{\varepsilon}{2}\mathscr{P}_0=\frac{\varepsilon}{2}\int u\,dx,
\]
\[
\mathscr{\tilde{P}}_5=\varepsilon\mathscr{\tilde{P}}_2=\varepsilon\int\frac{1}{2}u^2\,dx,\qquad\mathscr{\tilde{P}}_7=\frac{1}{2}\mathscr{P}_6=\frac{\varepsilon}{2}\int\left(3tu^2+xu\right)\,dx.
\]
 In this case, nothing new is
obtained. Note that the other approximate conservation law $\mathscr{P}_5$ did not arise from one of
the geometrical symmetries. According to Theorem \ref{T:thm3}, however, there is an
approximate Hamiltonian symmetry which gives rises to it, namely $\hat{\mathbf{v}}_{\mathscr{P}_5}$. The characteristic
of this approximate generalized symmetry is
\[
\bar{Q}_5\approx\mathscr{E}\delta\mathscr{P}_5=\mathscr{E}\varepsilon(3u^2-u_{xx})
\approx\varepsilon\Big( u_{xxxxx}-10\,u\,u_{xxx}-20\,u_x\,u_{xx}+30\,u^2\,u_x\Big)
\]
Note that $\bar{Q}_5$ happens to satisfy the Hamiltonian
condition (\ref{eq:eq36}) for $\mathscr{D}$ with the following functional
\[
\bar{\mathscr{P}_5}=\frac{\varepsilon}{2}\int(u_{xx}^2-5\,u^2\,u_{xx}+5\,u^4)\,dx
\]
Consequently, another approximate conservation law is provided for the Gardner equation.
\end{example}

Keeping on this procedure recursively, further
approximate conservation laws could be generated.
But, this procedure will be done in the next section by applying approximate recursion operators.
\section{Approximate Recursion Operators}

\begin{definition}  Let $\Delta$ be a system of perturbed differential equations. An \textit{approximate recursion
operator} for $\Delta$ is a linear  operator  $\mathscr{R}:\mathscr{A}^q\rightarrow\mathscr{A}^q$ in the space of $q$-tuples of
differential functions with the property that whenever $\mathbf{v}_Q$ is an approximate evolutionary
symmetry of $\Delta$, so $\mathbf{v}_{\tilde{Q}}$ is  with $\tilde{Q}\approx\mathscr{R}Q$.
\end{definition}

For nonlinear perturbed systems, there is an analogous criterion for a differential
 operator to be an approximate recursion operator, but to state it we need to introduce the
notion of the (formal) Fr\'{e}chet derivative of a differential function.

\begin{definition}
Let $P[u,\varepsilon] = P(x,u^{(n)},\varepsilon)\in \mathscr{A}^r$ be an $r$-tuple of differential
functions. The \textit{Fr\'{e}chet derivative} of $P$ is the perturbed differential operator $\mathbf{D}_P: \mathscr{A}^q\rightarrow\mathscr{A}^r$
defined so that
\begin{equation}
\label{eq:eq38}
\mathbf{D}_P(Q)=\frac{d}{d\epsilon}\Big|_{\epsilon=0}P[u+\epsilon\, Q[u,\varepsilon]]
\end{equation}
for any $Q \in \mathscr{A}^q$.
\end{definition}
\begin{proposition} If $P \in \mathscr{A}^r$ and $Q \in \mathscr{A}^q$ then
\begin{equation}
\label{eq:eq39}
\mathbf{D}_P(Q)\approx\mathbf{pr\,v}_Q(P).
\end{equation}
\end{proposition}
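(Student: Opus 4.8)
The plan is to establish the identity by a direct computation from the two defining formulas, namely the Fréchet derivative (\ref{eq:eq38}) and the prolongation of an evolutionary vector field (\ref{eq:eq14}). The key observation is that both sides are differential operators acting on $P$ that encode the infinitesimal change of $P$ under the flow generated by $Q$, so the equality should reduce to an application of the chain rule together with the commutation of $D_J$ with the $\epsilon$-derivative.

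First I would expand the left-hand side. Writing $P=P(x,u^{(n)},\varepsilon)$ and substituting $u+\epsilon Q$ for $u$ in every argument, the derivative $\frac{d}{d\epsilon}\big|_{\epsilon=0}$ brings down, via the chain rule, a sum over all the derivative-slots $u_J^\alpha$ of the form
\begin{equation}
\label{eq:frechetexp}
\mathbf{D}_P(Q)=\sum_{\alpha=1}^q\sum_J \frac{\partial P}{\partial u_J^\alpha}\,D_J Q_\alpha,
\end{equation}
because $\frac{d}{d\epsilon}\big|_{\epsilon=0}\,(u_J^\alpha+\epsilon\,D_J Q_\alpha)=D_J Q_\alpha$; here I use that $u_J^\alpha[u+\epsilon Q]=D_J(u^\alpha+\epsilon Q_\alpha)=u_J^\alpha+\epsilon\,D_J Q_\alpha$, since total derivatives are linear and commute with the constant-in-$x$ parameter $\epsilon$. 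Next I would expand the right-hand side. For an evolutionary vector field $\mathbf{v}_Q$ the characteristic has $\xi^i\equiv 0$, so the prolongation formula (\ref{eq:eq13}) collapses to $\phi_\alpha^J=D_J Q_\alpha$. Feeding this into the prolongation (\ref{eq:eq14}) and letting it act on the differential function $P$ gives
\begin{equation}
\label{eq:prolexp}
\mathbf{pr\,v}_Q(P)=\sum_{\alpha=1}^q\sum_J (D_J Q_\alpha)\,\frac{\partial P}{\partial u_J^\alpha},
\end{equation}
which is term-by-term identical to (\ref{eq:frechetexp}).

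The two expansions coincide exactly in the unperturbed setting; the role of the approximation symbol $\approx$ in (\ref{eq:eq39}) is to absorb terms of order $o(\varepsilon^p)$ that arise because $Q=Q[u,\varepsilon]$ itself carries explicit $\varepsilon$-dependence. I would therefore remark that when the substitution $u\mapsto u+\epsilon Q[u,\varepsilon]$ is differentiated in $\epsilon$, any contribution in which the $\varepsilon$-dependence of $Q$ interacts with the higher-order structure of $P$ is pushed into the perturbation remainder, so that the equality holds modulo $o(\varepsilon^p)$ rather than exactly. This is the point where care is needed and is the main obstacle of the argument: one must verify that truncating at order $\varepsilon^p$ is consistent on both sides, i.e. that the $\epsilon$-derivative at $\epsilon=0$ does not generate spurious lower-order-in-$\varepsilon$ terms that fail to match. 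I expect this to follow from the fact that $\epsilon$ and $\varepsilon$ are independent parameters, so that differentiating in $\epsilon$ leaves the $\varepsilon$-grading of each term intact, and the matching in (\ref{eq:frechetexp})–(\ref{eq:prolexp}) is preserved order by order in $\varepsilon$ up to the prescribed $o(\varepsilon^p)$ accuracy. Once this bookkeeping is in place, comparing (\ref{eq:frechetexp}) with (\ref{eq:prolexp}) completes the proof.
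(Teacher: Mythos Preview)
The paper states this proposition without proof; it is presented as the direct approximate analogue of a standard identity from \cite{Olver1}. Your chain-rule computation is correct and is exactly the classical argument: expanding the Fr\'echet derivative via $\frac{d}{d\epsilon}\big|_{\epsilon=0}u_J^\alpha[u+\epsilon Q]=D_JQ_\alpha$ and comparing with the prolongation formula for an evolutionary field (where $\xi^i\equiv 0$ forces $\phi_\alpha^J=D_JQ_\alpha$) gives the same sum $\sum_{\alpha,J}(D_JQ_\alpha)\,\partial P/\partial u_J^\alpha$ on both sides.

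One remark: your caution about the approximation symbol is more than the situation requires. As you yourself observe, $\epsilon$ and $\varepsilon$ are independent parameters, so differentiating in $\epsilon$ at $\epsilon=0$ does not disturb the $\varepsilon$-grading at all, and the two expansions coincide \emph{exactly}, not merely modulo $o(\varepsilon^p)$. The symbol $\approx$ in the paper's statement is inherited from the ambient approximate framework rather than signaling any genuine remainder in this particular identity; there is no ``main obstacle'' here to worry about.
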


\begin{theorem}
\label{T:thm4}
Suppose that $\Delta[u,\varepsilon]=0$ be a system of perturbed differential equations. If
 $\mathscr{R}:\mathscr{A}^q\rightarrow\mathscr{A}^q$  is a linear  operator such that
for all solutions $u$ of $\Delta$,
\begin{equation}
\label{eq:eq40}
\mathbf{D}_\Delta\cdot\mathscr{R}\approx\tilde{\mathscr{R}}\cdot\mathbf{D}_\Delta
\end{equation}
where $\tilde{\mathscr{R}}:\mathscr{A}^q\rightarrow\mathscr{A}^q$ is a linear differential  operator,
then $\mathscr{R}$ is an approximate recursion operator for the system.
\end{theorem}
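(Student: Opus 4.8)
The plan is to show that condition (\ref{eq:eq40}) guarantees that $\mathscr{R}$ maps approximate symmetries to approximate symmetries, which is exactly the defining property of an approximate recursion operator. The starting point is the infinitesimal symmetry criterion established earlier: by Proposition~4.4, for any differential function $P$ and any characteristic $Q$ we have the identification $\mathbf{D}_P(Q)\approx\mathbf{pr\,v}_Q(P)$ modulo $o(\varepsilon^p)$. Combining this with the symmetry criterion (\ref{eq:eq15}), I would first restate what it means for $\mathbf{v}_Q$ to be an approximate evolutionary symmetry of $\Delta[u,\varepsilon]=0$: namely that $\mathbf{pr\,v}_Q(\Delta)\approx 0$ on solutions, which by Proposition~4.4 is equivalent to $\mathbf{D}_\Delta(Q)\approx 0$ whenever $\Delta[u,\varepsilon]=0$.

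Granting this reformulation, the core computation is short. Suppose $\mathbf{v}_Q$ is an approximate symmetry, so that $\mathbf{D}_\Delta(Q)\approx 0$ on solutions. Set $\tilde{Q}\approx\mathscr{R}Q$. I then apply the Fr\'echet derivative of $\Delta$ to $\tilde{Q}$ and use the intertwining hypothesis (\ref{eq:eq40}):
\begin{equation}
\label{eq:recproof}
\mathbf{D}_\Delta(\tilde{Q})\approx\mathbf{D}_\Delta\cdot\mathscr{R}\,(Q)\approx\tilde{\mathscr{R}}\cdot\mathbf{D}_\Delta\,(Q)=\tilde{\mathscr{R}}\big(\mathbf{D}_\Delta(Q)\big)\approx\tilde{\mathscr{R}}(0)\approx 0,
\end{equation}
where the last step uses that $\tilde{\mathscr{R}}$ is a linear differential operator and therefore annihilates a quantity that is already $o(\varepsilon^p)$ on solutions. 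Reading (\ref{eq:recproof}) backwards through Proposition~4.4 gives $\mathbf{pr\,v}_{\tilde{Q}}(\Delta)\approx 0$ on solutions, i.e.\ $\mathbf{v}_{\tilde{Q}}$ is again an approximate evolutionary symmetry. Since $\tilde{Q}\approx\mathscr{R}Q$, this is precisely the statement that $\mathscr{R}$ is an approximate recursion operator in the sense of Definition~4.1.

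The main obstacle is purely in the bookkeeping of the perturbation order, not in the algebraic skeleton. One must check that each ``$\approx$'' genuinely represents equality up to $o(\varepsilon^p)$ and that these errors compose correctly: in particular, when the linear differential operator $\tilde{\mathscr{R}}$ is applied to a term that is $o(\varepsilon^p)$ on solutions, its coefficients (which may themselves depend on $\varepsilon$ and on $u$) do not lower the order of smallness. I would argue that because $\tilde{\mathscr{R}}$ has coefficients in $\mathscr{A}$ with at most a regular dependence on $\varepsilon$, applying it preserves the $o(\varepsilon^p)$ estimate, so the chain in (\ref{eq:recproof}) is valid. A secondary point requiring care is that the hypothesis (\ref{eq:eq40}) is only assumed to hold \emph{on solutions} of $\Delta$; thus every intermediate identity in (\ref{eq:recproof}) must be understood as restricted to the solution set, which is exactly the locus on which the symmetry criterion is tested, so no difficulty arises. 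With these order-counting checks in place, the implication is complete.
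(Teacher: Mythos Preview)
Your argument is correct and is precisely the standard one: reformulate the approximate symmetry condition via Proposition~4.4 as $\mathbf{D}_\Delta(Q)\approx 0$ on solutions, then push $\mathbf{D}_\Delta$ through $\mathscr{R}$ using the intertwining hypothesis~(\ref{eq:eq40}) and apply linearity of $\tilde{\mathscr{R}}$. Your care about the $o(\varepsilon^p)$ bookkeeping is appropriate and the justification you give is adequate.

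There is nothing to compare against in the paper itself: Theorem~\ref{T:thm4} is stated there without proof, as it is a direct transcription of the exact (unperturbed) result in Olver~\cite{Olver1} (Theorem~5.29). Your write-up is exactly the proof one would expect, adapted to the approximate setting.
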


Suppose  that $\Delta[u,\varepsilon]=u_t-K[u,\varepsilon]$ is a perturbed evolution equation. Then
$\mathbf{D}_\Delta=D_t-\mathbf{D}_K$. If $\mathscr{R}$ is an approximate recursion operator, then it is not hard to observe that the
 operator $\tilde{\mathscr{R}}$ in (\ref{eq:eq40}) must be the same as $\mathscr{R}$. Therefore, the condition (\ref{eq:eq40}) in
this case reduces to the commutator condition
\begin{equation}
\label{eq:eq41}
\mathscr{R}_t\approx\left[\mathbf{D}_K,\mathscr{R}\right]
\end{equation}
for an approximate recursion operator of a  perturbed evolution equation.

From (\ref{eq:eq41}),  we can conclude that if $\mathscr{R}$ is an approximate recursion operator, then for all $l\geq 1$
in which $\varepsilon^l\mathscr{R}\neq 0$, $\varepsilon^l\mathscr{R}$ is an approximate recursion operator
\[
(\varepsilon^l\mathscr{R})_t=\varepsilon^l\mathscr{R}_t\approx\varepsilon^l\left[\mathbf{D}_K,\mathscr{R}\right]\approx\left[\mathbf{D}_K,\varepsilon^l\mathscr{R}\right].
\]
In order to illustrate the significance of the above
theorem, we discuss a couple of examples, including the potential Burgers' equation and the Gardner equation.
In the first example, we apply some technical methods, used in Examples 5.8 and 5.30 of \cite{Olver1}.

\begin{example}
Consider the potential Burgers' equation
\[
u_t=u_{xx}+\varepsilon u_x^2.
\]
As mentioned in  \cite{Pak}, approximate symmetries of the potential Burgers' equation are given  by the following twelve vector fields
\[
\mathbf{v}_1=\partial_x,\quad \mathbf{v}_2=\partial_t,\quad
\mathbf{v}_3=x\partial_x+2t\partial_t,\quad\mathbf{v}_4=2t\partial_x-(xu-\varepsilon t\frac{u^2}{2})\partial_u,\quad\mathbf{v}_5=(u-\varepsilon t\frac{u^2}{2})\partial_u,
\]
\[
\mathbf{v}_6=4xt\partial_x+4t^2\partial_t-(x^2+2t)(u-\varepsilon t\frac{u^2}{2})\partial_u,\quad\mathbf{v}_7=\varepsilon \mathbf{v}_1,\quad \mathbf{v}_8=\varepsilon \mathbf{v}_2,\quad \mathbf{v}_9=\varepsilon(x\partial_x+2t\partial_t)=\varepsilon \mathbf{v}_3,
\]
\[
\mathbf{v}_{10}=\varepsilon(2t\partial_x-xu\partial_u)=\varepsilon \mathbf{v}_4,\quad \mathbf{v}_{11}=\varepsilon u\partial_u=\varepsilon \mathbf{v}_5,\quad \mathbf{v}_{12}=\varepsilon(4xt\partial_x+4t^2\partial_t-(x^2+2t)u\partial_u)=\varepsilon \mathbf{v}_6,
\]
plus the infinite family of vector fields
\[
\mathbf{v}_{f,g}=\Big(f(x,t)(1-\varepsilon u)+\varepsilon g(x,t)\Big)\partial_u,
\]
where $f,g$ are arbitrary solutions of the heat equation $u_t=u_{xx}$.

The corresponding characteristics of the first twelve approximate symmetries are
\[
Q_1=u_x,\quad Q_2=u_{xx}+\varepsilon u_x^2,\quad Q_3=xu_x+2t(u_{xx}+\varepsilon u_x^2),\quad Q_4=xu+2tu_x-\varepsilon t\frac{u^2}{2},
\]
\[
Q_5=u-\varepsilon t\frac{u^2}{2},\quad Q_6=(x^2+2t)(u-\varepsilon t\frac{u^2}{2})+4xtu_x+4t^2(u_{xx}+\varepsilon u_x^2),\quad Q_7=\varepsilon Q_1=\varepsilon u_x,
\]
\[
 Q_8=\varepsilon Q_2=\varepsilon u_{xx},\qquad Q_9=\varepsilon Q_3=\varepsilon (xu_x+2tu_{xx}),\qquad
 Q_{10}=\varepsilon Q_4=\varepsilon (xu+2tu_x),
\]
\[
 Q_{11}=\varepsilon Q_5=\varepsilon u,\qquad Q_{12}=\varepsilon Q_6=\varepsilon ((x^2+2t)u+4xtu_x+4t^2u_{xx}),
\]
(up to sign).

Inspection
of $Q_1,\,Q_2,\,Q_7,\,Q_8$ leads us to the conjecture that $\mathscr{R}_1=D_x+\varepsilon u_x$ is an approximate
recursion operator,  since $Q_3=\mathscr{R}_1 Q_1,\,Q_8=\mathscr{R}_1 Q_7,$ etc. To prove this, we note
that the Fr\'{e}chet derivative for the right-hand side of potential Burgers' equation is
\[
D_K=D_x^2+2\varepsilon u_x D_x.
\]
We must verify (\ref{eq:eq41}). The time derivative of the first approximate recursion operator $\mathscr{R}_1$
on the solutions of the potential Burgers' equation is the multiplication operator
\[
(\mathscr{R}_1)_t=(D_x+\varepsilon u_x)_t=\varepsilon u_{xt}=\varepsilon( u_{xxx}+2\varepsilon u_x u_{xx})=\varepsilon u_{xxx}.
\]
On the other hand, the commutator is computed using Leibniz' rule for
differential operators:
\[
[D_K,\mathscr{R}_1]=\varepsilon u_{xxx}.
\]
Comparing these two verifies (\ref{eq:eq41}) and proves that $\mathscr{R}_1$ is an approximate recursion
operator for the potential Burgers' equation.

There is thus an infinite hierarchy of approximate symmetries, with characteristics
$\mathscr{R}_1^kQ_1,\, k = 0, 1, 2,\ldots$ For example, the next characteristic after $Q_{12}$ in the
sequence is
\[
\mathscr{R}_1Q_{12}=\varepsilon\Big((x^2+6t)u_x+2x\,(u+2tu_{xx})+4t^2u_{xxx}\Big).
\]
To obtain the characteristics depending on $x$ and $t$, we require a second approximate
recursion operator, which by inspection, we guess to be
\[
\mathscr{R}_2=t\mathscr{R}_1+\frac{x}{2}.
\]
Using the fact that $\mathscr{R}_1$ satisfies (\ref{eq:eq41}), we readily find
\[
(\mathscr{R}_2)_t=t(\mathscr{R}_1)_t+\mathscr{R}_1=t[D_K,\mathscr{R}_1]+\mathscr{R}_1,
\]
whereas
\[
[D_K,\mathscr{R}_2] = t[D_K,\mathscr{R}_1]+[D_x^2+2\varepsilon u_x D_x
,\frac{1}{2}x]
\]
\[
\hspace{3.2cm}=t[D_K,\mathscr{R}_1]+(D_x+\varepsilon u_x)=t[D_K,\mathscr{R}_1]+\mathscr{R}_1,
\]
proving that $\mathscr{R}_2$ is also an approximate recursion operator. There is thus a doubly
infinite hierarchy of approximate generalized symmetries of potential Burgers' equation, with
characteristics $\mathscr{R}_2^l\mathscr{R}_1^kQ_1,\, k,l\geq 0$. For instance, $Q_2=\mathscr{R}_1Q_1$,
$Q_3=2\mathscr{R}_2\mathscr{R}_1Q_1$
and so on.
\end{example}

\begin{example}
Consider the Gardner equation, which was shown
to have two Hamiltonian structures  with
 \[
 \mathscr{D}=D_x,\qquad\mathscr{E}=4uD_x+2u_x+3\varepsilon(uu_x+u^2D_x)-D_x^3.
 \]
Hence, the operator connecting our hierarchy of approximate Hamiltonian symmetries is
\[
\mathscr{R}=\mathscr{E}\cdot\mathscr{D}^{-1}=4u+3\varepsilon u^2+(2+3\varepsilon u)u_xD_x^{-1}-D_x^2.
\]
Therefore,  our
results on approximate bi-Hamiltonian systems will provide ready-made proofs of the
existence of infinitely many approximate conservation laws and approximate symmetries for the
Gardner equation.

Note
that the Fr\'{e}chet derivative for the right-hand side of Gardner's equation
is
\[
\mathbf{D}_K=6(1+2\varepsilon u)u_x+6(u+\varepsilon u^2)D_x-D_x^3,
\]
and
\[
\mathscr{R}_t=(4+6\varepsilon u)u_t+(2u_{xt}+3\varepsilon u_tu_x+3\varepsilon u u_{xt})D_x^{-1}\hspace{4.3cm}
\]
\[
=12uu_x(2+5\varepsilon u)-(4+6\varepsilon u)u_{xxx}\hspace{5.6cm}
\]
\[
+\Big(6uu_{xx}(2+5\varepsilon u)+12 u_x^2(1+5\varepsilon u)-u_{xxxx}(2+3\varepsilon u)-3\varepsilon u_xu_{xxx}\Big)\,D_x^{-1}.
\]
\end{example}

\begin{theorem}
Let $\bar{Q}_0=\varepsilon u_x$. For each $k\geq 0$, the differential polynomial $\bar{Q}_k=\mathscr{R}^k\bar{Q}_0$ is a total $x$-derivative, $\bar{Q}_k = D_xR_k$, and hence we can recursively define $\bar{Q}_{k+1}=\mathscr{R}\bar{Q}_k$.
 Each $\bar{Q}_k$ is the characteristic of an approximate symmetry of the Gardner equation.
\end{theorem}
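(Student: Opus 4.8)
The plan is to run an induction on $k$ in which the inductive hypothesis is strengthened enough to survive one application of $\mathscr{R}$. Since $\mathscr{R}=\mathscr{E}\cdot\mathscr{D}^{-1}$ carries the nonlocal factor $\mathscr{D}^{-1}=D_x^{-1}$, the bare statement ``$\bar Q_k$ is a total $x$-derivative'' will not reproduce itself; I would therefore carry the triple hypothesis $(H_k)$: that $\bar Q_k$ is a genuine differential polynomial (no surviving $D_x^{-1}$), that $\bar Q_k=D_xR_k$ with $R_k$ local, and that $R_k$ is itself a variational derivative $R_k=\delta\hat{\mathscr{P}}_k=\mathbf{E}(\hat{\mathscr{P}}_k)$ of some functional. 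The base case is immediate: $\bar Q_0=\varepsilon u_x=D_x(\varepsilon u)$, so $R_0=\varepsilon u=\delta\big(\tfrac{\varepsilon}{2}\int u^2\,dx\big)$ is patently a gradient.

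For the inductive step I would first dispose of locality. Because $\mathscr{D}=D_x$, the nonlocal factor is resolved at once, $\bar Q_{k+1}=\mathscr{R}\bar Q_k=\mathscr{E}\,\mathscr{D}^{-1}(D_xR_k)\approx\mathscr{E}R_k$, and since $\mathscr{E}$ is an approximate differential operator this is manifestly a differential polynomial with no residual $D_x^{-1}$; this already gives well-definedness of the recursion. What remains is to show that $\mathscr{E}R_k$ is again a total $x$-derivative and that the resulting potential is a gradient, equivalently that $R_{k+1}=\mathscr{D}^{-1}\mathscr{E}R_k=\mathscr{R}^{*}R_k$ (where $\mathscr{R}^{*}=\mathscr{D}^{-1}\mathscr{E}$ by skew-adjointness of $\mathscr{D},\mathscr{E}$) is a local variational derivative.

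Here I would invoke the approximate bi-Hamiltonian structure of the Gardner equation established above: $\mathscr{D}$ and $\mathscr{E}$ are approximate Hamiltonian operators, and being compatible they form an approximate Hamiltonian pair, so every combination $\mathscr{D}+c\,\mathscr{E}$ is again approximately Hamiltonian. The decisive ingredient is Magri's lemma in approximate form: for a Hamiltonian pair the adjoint recursion operator $\mathscr{R}^{*}=\mathscr{D}^{-1}\mathscr{E}$ sends closed variational one-forms (gradients) to closed variational one-forms, modulo $o(\varepsilon^p)$. Concretely, a one-form $\theta$ is closed exactly when its Fr\'echet derivative is self-adjoint, $\mathbf{D}_\theta\approx\mathbf{D}_\theta^{*}$, so one evaluates $\mathbf{D}_{\mathscr{R}^{*}R_k}-\mathbf{D}_{\mathscr{R}^{*}R_k}^{*}$ and shows it is $o(\varepsilon^p)$, the cancellation being forced by the skew-adjointness of $\mathscr{D},\mathscr{E}$ together with the Jacobi/compatibility identities of the pair. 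Applying this with $\theta=R_k$, a gradient by $(H_k)$, yields that $\mathscr{R}^{*}R_k$ is a gradient; in particular it is local, whence $\mathscr{E}R_k\approx\mathscr{D}R_{k+1}=D_xR_{k+1}$ is a total $x$-derivative and $R_{k+1}=\delta\hat{\mathscr{P}}_{k+1}$, closing the induction and giving $\bar Q_k=D_xR_k$ for all $k$.

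I expect this gradient-preservation step to be the main obstacle: one must genuinely verify that $\mathscr{R}^{*}$ leaves the space of variational derivatives invariant, because a direct factorization shows that $\mathscr{E}$ does \emph{not} send arbitrary differential functions to total $x$-derivatives (for instance $\mathscr{E}u_x$ is not one, as $\mathbf{E}(\mathscr{E}u_x)\not\approx 0$), so the gradient hypothesis on $R_k$ cannot be dropped and the compatibility of the pair is indispensable. The symmetry conclusion is then the easy part: the seed $\bar Q_0=\varepsilon u_x=\varepsilon Q_1$ is an approximate symmetry of the Gardner equation, and $\mathscr{R}$ satisfies the commutator criterion \eqref{eq:eq41} (equivalently, it is the recursion operator supplied by the approximate bi-Hamiltonian pair), hence is an approximate recursion operator; by the defining property of such operators each $\bar Q_{k+1}\approx\mathscr{R}\bar Q_k$ is again the characteristic of an approximate symmetry, and induction delivers the claim for every $k\ge 0$.
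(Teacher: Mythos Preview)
Your strategy differs from the paper's, and as stated it has a genuine gap at exactly the point you flag as ``the main obstacle.'' You invoke Magri's lemma to obtain that $\mathscr{R}^{*}R_k=\mathscr{D}^{-1}\mathscr{E}R_k$ is closed, and then write ``in particular it is local, whence $\mathscr{E}R_k\approx D_xR_{k+1}$.'' But the standard form of Magri's lemma (cf.\ Olver, Lemma~7.25, and the paper's own Theorem~\ref{T:thm5}) runs in the opposite direction: it \emph{assumes} that $\mathscr{E}R_k$ already lies in the image of $\mathscr{D}$ and then concludes that the resulting $R_{k+1}$ is a variational derivative. The image condition ``$K_{n-1}$ lies in the image of $\mathscr{D}$'' is listed explicitly as a hypothesis in Theorem~\ref{T:thm5}, not as a conclusion. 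Your closedness computation $\mathbf{D}_{\mathscr{R}^{*}R_k}-\mathbf{D}_{\mathscr{R}^{*}R_k}^{*}\approx 0$ would, at best, be an identity of formal pseudo-differential operators in which $D_x^{-1}$ still appears; it does not by itself force the $D_x^{-1}$ to cancel, and the Helmholtz/exactness step ``closed $\Rightarrow$ exact $\Rightarrow$ local'' only applies once you already know the one-form is local. So the very statement you need --- that $\mathscr{E}R_k$ is a total $x$-derivative --- is being assumed, not proved.

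The paper avoids this circularity by an elementary, equation-specific argument (modelled on Olver's Theorem~5.31 for KdV) that makes no appeal to Magri's lemma. Since $\bar Q_0=\varepsilon u_x$ carries a factor of $\varepsilon$, one may work with the reduced operator $4u+2u_xD_x^{-1}-D_x^2$; factoring gives
\[
\bar Q_{k+1}\approx D_x\Big(2u\,D_x^{-1}\bar Q_k+2D_x^{-1}(u\bar Q_k)-D_x\bar Q_k\Big),
\]
so the whole induction reduces to showing that $u\bar Q_k$ is a total $x$-derivative. This is obtained by a direct integration-by-parts using the adjoint identity $\varepsilon\mathscr{R}^{*}=D_x^{-1}(\varepsilon\mathscr{R})D_x$: one passes $\mathscr{R}^k$ across the pairing to get $u\bar Q_k=u_x\cdot(\varepsilon\mathscr{R}^{*})^k[u]+D_xA_k$, then uses the adjoint identity once more to rewrite the first term as $-u\bar Q_k+D_xB_k$, whence $u\bar Q_k=\tfrac12 D_x(A_k+B_k)$. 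No gradient hypothesis and no compatibility of the pair are needed at this stage; locality is produced directly. If you want to salvage your route, you would have to supply an independent proof that $\mathscr{E}(\delta\hat{\mathscr{P}}_k)\in\operatorname{im}D_x$ before invoking Magri --- and the cleanest way to do that here is precisely the adjoint/integration-by-parts computation the paper uses.
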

\begin{proof}
To prove this theorem, we apply the similar method applied in theorem 5.31 of  \cite{Olver1}.

We proceed by induction on $k$, so suppose that $\bar{Q}_k=\mathscr{R}^k\bar{Q}_0$
 for some $R_k\in\mathscr{A}$. From the form of the approximate recursion operator,
 \[
 \bar{Q}_{k+1}=\varepsilon\Big(4u\bar{Q}_k+2u_xD_x^{-1}\bar{Q}_k-D_x^2\bar{Q}_k\Big)=\varepsilon \,D_x \Big(2uD_x^{-1}\bar{Q}_k+2D_x^{-1}(u\bar{Q}_k)-D_x\bar{Q}_k\Big)
 \]
If we can prove that for some differential polynomial $S_k\in\mathscr{A}$,
$u\bar{Q}_k = D_xS_k$,
we will indeed have proved that $\bar{Q}_{k+1}=D_xR_{k+1}$, where $R_{k+1}$ is the above expression in
brackets. Consequently, the induction step will be completed.

To prove this fact, note  that the formal adjoint of the approximate recursion
operator $\varepsilon\mathscr{R}$ is
\[
\varepsilon\mathscr{R}^*=\varepsilon(4u-2D_x^{-1}\cdot u_x-D_x^2)=D_x^{-1}\,\varepsilon\mathscr{R}\,D_x.
\]
We apply this in order to integrate the expression $u\bar{Q}_k$, by parts, so
\[
\bar{Q}_k=u\mathscr{R}^k[\varepsilon u_x]=u_x\cdot(\varepsilon\mathscr{R}^*)^k[u]+D_xA_k
\]
for some differential function $A_k\in\mathscr{A}$. On the other hand, using a further
integration by parts, for some  $B_k\in\mathscr{A}$ the following identity holds:
\[
u_x\cdot(\varepsilon\mathscr{R}^*)^k[u]=u_x\cdot D_x^{-1}\,\varepsilon\mathscr{R}[u_x]=u_x\cdot D_x^{-1}
\bar{Q}_k=-u\bar{Q}_k+D_xB_k
\]
 Substituting into the previous identity, we conclude
\[
u\bar{Q}_k = D_xS_k,\qquad where\qquad S_k= \frac{1}{2}(A_k + B_k),
\]
which proves our claim.
\end{proof}

\begin{definition} A pair of skew-adjoint $q \,\times\, q$ matrix of differential operators $\mathscr{D}$
and $\mathscr{E}$  is said to form an \textit{approximately Hamiltonian pair} if every linear combination $a\mathscr{D}  + b\mathscr{E}$,
 $a,\,b\in\mathbb{R}$, is an approximate Hamiltonian operator. A system of perturbed evolution equations is an
\textit{approximate bi-Hamiltonian system} if it can be written in the form
\begin{equation}
\label{eq:eq42}
\frac{\partial u}{\partial t}=K_1[u,\varepsilon]\approx\mathscr{D}\delta\mathscr{H}_1\approx\mathscr{E}\delta\mathscr{H}_0
\end{equation}
 where $\mathscr{D}$, $\mathscr{E}$ form
an approximately Hamiltonian pair, and $\mathscr{H}_0$ and $\mathscr{H}_1$ are
appropriate Hamiltonian functionals.
\end{definition}

\begin{lemma} If $\mathscr{D}$, $\mathscr{E}$  are skew-adjoint operators, then they form an approximately Hamiltonian
pair if and only if $\mathscr{D}$, $\mathscr{E}$ and $\mathscr{D}+\mathscr{E}$ are all approximate Hamiltonian operators.
\end{lemma}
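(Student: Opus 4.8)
The plan is to reduce the whole statement to the approximate Jacobi identity, since skew-adjointness is preserved under real linear combinations and is already part of the hypothesis. Recall from the discussion preceding~(\ref{eq:eq27}) that an operator is approximately Hamiltonian precisely when it is skew-adjoint \emph{and} its Poisson bracket~(\ref{eq:eq27}) satisfies the Jacobi identity modulo $o(\varepsilon^p)$. Following Olver's functional multi-vector formalism, I would encode the Jacobi identity for a skew-adjoint operator $\mathscr{D}$ through the associated functional bivector $\Theta_{\mathscr{D}} = \tfrac{1}{2}\int \theta \wedge \mathscr{D}\theta \, dx$ and the \emph{Jacobiator} $J(\mathscr{D}) := \mathbf{pr}\,\mathbf{v}_{\mathscr{D}\theta}(\Theta_{\mathscr{D}})$, so that the approximate Jacobi identity reads $J(\mathscr{D}) \approx 0$.

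The forward implication is immediate: if $\mathscr{D},\mathscr{E}$ form an approximate Hamiltonian pair, then specializing the constants $(a,b)$ in $a\mathscr{D}+b\mathscr{E}$ to $(1,0)$, $(0,1)$ and $(1,1)$ shows at once that $\mathscr{D}$, $\mathscr{E}$ and $\mathscr{D}+\mathscr{E}$ are each approximate Hamiltonian operators.

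For the converse, the key structural observation is that $J$ is a homogeneous quadratic function of its operator argument modulo $o(\varepsilon^p)$: the characteristic $\mathscr{D}\theta$ defining $\mathbf{v}_{\mathscr{D}\theta}$ is linear in $\mathscr{D}$, the integrand of $\Theta_{\mathscr{D}}$ is likewise linear in $\mathscr{D}$, and since prolongation acts as a derivation the full expression $J(\mathscr{D})$ is bilinear in the two occurrences of $\mathscr{D}$. Polarizing, I would write $J(\mathscr{D}+\mathscr{E}) \approx J(\mathscr{D}) + J(\mathscr{E}) + \bar{J}(\mathscr{D},\mathscr{E})$, where $\bar{J}(\mathscr{D},\mathscr{E})$ is the symmetric cross-term, and more generally $J(a\mathscr{D}+b\mathscr{E}) \approx a^2 J(\mathscr{D}) + ab\,\bar{J}(\mathscr{D},\mathscr{E}) + b^2 J(\mathscr{E})$ for all $a,b\in\mathbb{R}$. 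Assuming $\mathscr{D}$, $\mathscr{E}$ and $\mathscr{D}+\mathscr{E}$ are approximate Hamiltonian gives $J(\mathscr{D})\approx 0$, $J(\mathscr{E})\approx 0$ and $J(\mathscr{D}+\mathscr{E})\approx 0$; subtracting the first two from the third forces the cross-term $\bar{J}(\mathscr{D},\mathscr{E})\approx 0$. Feeding these three facts back into the polarized expansion yields $J(a\mathscr{D}+b\mathscr{E})\approx 0$ for every $a,b$, and together with skew-adjointness of $a\mathscr{D}+b\mathscr{E}$ this is exactly the assertion that $\mathscr{D},\mathscr{E}$ form an approximate Hamiltonian pair.

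The main obstacle I anticipate is justifying the polarization identity in the approximate category, that is, checking that the quadratic-in-$\mathscr{D}$ structure of $J$ survives the $o(\varepsilon^p)$ truncation and that rescaling by the fixed real constants $a,b$ does not disturb the perturbation order. This is precisely where the linearity of the Euler operator, prolongation and Fr\'echet derivative emphasized in the Introduction does the work: the assignments $\mathscr{D}\mapsto\Theta_{\mathscr{D}}$ and $\mathscr{D}\mapsto\mathscr{D}\theta$ are both $\mathbb{R}$-linear, so no error terms of order higher than those already present in the individual Jacobiators are generated, and the bilinear cross-term $\bar{J}$ is well defined modulo $o(\varepsilon^p)$.
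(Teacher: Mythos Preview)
The paper states this lemma without proof, so there is no argument in the text to compare against directly; the result is imported from the non-perturbed setting (Lemma~7.20 of~\cite{Olver1}) with the Jacobi identity replaced by its approximate version. Your proposal is correct and is precisely the standard polarization argument one finds in~\cite{Olver1}: the Jacobiator $J(\mathscr{D})=\mathbf{pr}\,\mathbf{v}_{\mathscr{D}\theta}(\Theta_{\mathscr{D}})$ is homogeneous quadratic in $\mathscr{D}$, so knowing $J(\mathscr{D})$, $J(\mathscr{E})$ and $J(\mathscr{D}+\mathscr{E})$ determines $J(a\mathscr{D}+b\mathscr{E})$ for all $a,b$. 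Note also that the symmetric cross term you isolate, $\bar J(\mathscr{D},\mathscr{E})=\mathbf{pr}\,\mathbf{v}_{\mathscr{D}\theta}(\Theta_{\mathscr{E}})+\mathbf{pr}\,\mathbf{v}_{\mathscr{E}\theta}(\Theta_{\mathscr{D}})$, is exactly the quantity appearing in the subsequent Corollary~(\ref{eq:eq43}), so your proof simultaneously establishes that corollary.
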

\begin{corollary}
Let $\mathscr{D}$ and $\mathscr{E}$  be Hamiltonian differential operators. Then $\mathscr{D}$, $\mathscr{E}$
form an approximately Hamiltonian pair if and only if
\begin{equation}
\label{eq:eq43}
\mathbf{pr\,v}_{\mathscr{D}\theta}(\Theta_\mathscr{E})+\mathbf{pr\,v}_{\mathscr{E}\theta}(\Theta_\mathscr{D})=o(\varepsilon^p),
\end{equation}
where
\[
\Theta_\mathscr{D}=\frac{1}{2}\int\{\theta\wedge\mathscr{D}\theta \}\,dx,\qquad \Theta_\mathscr{E}=\frac{1}{2}\int\{\theta\wedge\mathscr{E}\theta \}\,dx
\]
are the functional bi-vectors representing the respective Poisson brackets.

\end{corollary}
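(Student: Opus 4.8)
The plan is to reduce everything to the preceding Lemma together with the (approximate) functional bivector characterization of the Jacobi identity, and then to exploit the bilinearity of the objects involved. By the Lemma, since $\mathscr{D}$ and $\mathscr{E}$ are assumed skew-adjoint, the pair $(\mathscr{D},\mathscr{E})$ is approximately Hamiltonian precisely when $\mathscr{D}$, $\mathscr{E}$ and $\mathscr{D}+\mathscr{E}$ are each approximate Hamiltonian operators. As $\mathscr{D}$ and $\mathscr{E}$ are given to be Hamiltonian, the entire content of the statement collapses to a single remaining condition: that $\mathscr{D}+\mathscr{E}$ be an approximate Hamiltonian operator. Hence the corollary will follow once I translate this last requirement into the asserted trivector identity.

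First I would invoke the perturbed analogue of Olver's functional bivector criterion for the Jacobi identity (\cite{Olver1}): a skew-adjoint operator $\mathscr{F}$ is an approximate Hamiltonian operator if and only if its associated functional bivector $\Theta_\mathscr{F}=\frac{1}{2}\int\{\theta\wedge\mathscr{F}\theta\}\,dx$ satisfies
\[
\mathbf{pr\,v}_{\mathscr{F}\theta}(\Theta_\mathscr{F})=o(\varepsilon^p).
\]
This is the step carrying the real weight, since it is where skew-adjointness is converted into the Jacobi identity via integration by parts in the formal variational complex, now tracked only to order $\varepsilon^p$. Granting this criterion, I would set $\mathscr{F}=\mathscr{D}+\mathscr{E}$ and use two elementary linearities: the characteristic $\mathscr{F}\theta=\mathscr{D}\theta+\mathscr{E}\theta$ depends linearly on the operator, so that $\mathbf{pr\,v}_{\mathscr{F}\theta}=\mathbf{pr\,v}_{\mathscr{D}\theta}+\mathbf{pr\,v}_{\mathscr{E}\theta}$; and the bivector assignment is linear, so that $\Theta_\mathscr{F}=\Theta_\mathscr{D}+\Theta_\mathscr{E}$.

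Expanding the resulting product then yields four terms,
\[
\mathbf{pr\,v}_{\mathscr{F}\theta}(\Theta_\mathscr{F})=\mathbf{pr\,v}_{\mathscr{D}\theta}(\Theta_\mathscr{D})+\mathbf{pr\,v}_{\mathscr{E}\theta}(\Theta_\mathscr{E})+\Big(\mathbf{pr\,v}_{\mathscr{D}\theta}(\Theta_\mathscr{E})+\mathbf{pr\,v}_{\mathscr{E}\theta}(\Theta_\mathscr{D})\Big).
\]
Because $\mathscr{D}$ and $\mathscr{E}$ are Hamiltonian, the two diagonal terms are each $o(\varepsilon^p)$ by the criterion of the previous paragraph, so they may be absorbed into the error. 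What survives is exactly the cross term, and therefore $\mathscr{D}+\mathscr{E}$ is an approximate Hamiltonian operator if and only if $\mathbf{pr\,v}_{\mathscr{D}\theta}(\Theta_\mathscr{E})+\mathbf{pr\,v}_{\mathscr{E}\theta}(\Theta_\mathscr{D})=o(\varepsilon^p)$, which is precisely (\ref{eq:eq43}). Combining this with the reduction from the Lemma closes the equivalence in both directions.

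The main obstacle I anticipate lies in the bookkeeping hidden in the diagonal cancellation: I must be sure that the $o(\varepsilon^p)$ remainders produced by the two individual Jacobi identities, together with the error already present in the approximate bivector criterion, genuinely add within the same order and do not conspire to produce a larger-order contribution when summed inside the trivector. Making this rigorous requires checking that the prolongation, the wedge product, and the integration by parts used to evaluate $\mathbf{pr\,v}_{\mathscr{F}\theta}(\Theta_\mathscr{F})$ are all compatible with the perturbative truncation — that is, that they commute with discarding terms beyond order $\varepsilon^p$ — which is the one place where the classical computation does not transfer verbatim and must be justified afresh in the approximate framework.
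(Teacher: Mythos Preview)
Your argument is correct and follows exactly the route the paper intends: the corollary is stated without proof immediately after the Lemma, so the implicit argument is precisely the one you give --- reduce via the Lemma to the single condition that $\mathscr{D}+\mathscr{E}$ be approximately Hamiltonian, invoke the approximate version of Olver's functional bivector criterion, expand by bilinearity, and cancel the diagonal terms using the hypothesis that $\mathscr{D}$ and $\mathscr{E}$ are Hamiltonian. Your closing caveat about the $o(\varepsilon^p)$ bookkeeping is a fair remark, but in this setting the operations involved are polynomial in $\varepsilon$, so the truncation is purely algebraic and the remainders add without issue.
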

\begin{example}
Consider the approximate Hamiltonian operators  $\mathscr{D}$, $\mathscr{E}$ associated with the
Gardner equation. We have
\[
\mathbf{pr\,v}_{\mathscr{D}\theta}=\sum_{\alpha,J}D_J\left(\mathscr{D}\theta\right)_\alpha\frac{\partial}{\partial u_J^\alpha}=\sum_{\alpha,J}D_J\left(\sum_{\beta=1}^q\mathscr{D}_{\alpha\beta}\theta^\beta\right)\frac{\partial}{\partial u_J^\alpha}
\]
in the case of the second approximate Hamiltonian
operator for the Gardner equation, we have
\[
\mathbf{pr\,v}_{\mathscr{E}\theta}(u)=\mathscr{E}\theta,\qquad\mathbf{pr\,v}_{\mathscr{E}\theta}(u^2)=2u\mathscr{E}\theta,
\]
\[
\mathbf{pr\,v}_{\mathscr{E}\theta}(\Theta_\mathscr{D})=\mathbf{pr\,v}_{\mathscr{E}\theta}\int\frac{1}{2}\{\theta\wedge\theta_x \}\,dx=o(\varepsilon^p)
\]
trivially, by the properties of the wedge product, it is deduced that:
\[
\quad\mathbf{pr\,v}_{\mathscr{D}\theta}(\Theta_\mathscr{E})=\mathbf{pr\,v}_{\mathscr{D}\theta}\int\{(2u+\frac{3\varepsilon }{2}u^2)\theta\wedge\theta_x+\frac{1}{2}\theta_x\wedge\theta\}\,dx\]
\[
\approx\int\{(2+3\varepsilon u)\theta_x\wedge\theta\wedge\theta_x\}=o(\varepsilon^p)
\]
 Thus $\mathscr{D}$ and $\mathscr{E}$ form an approximately Hamiltonian
pair.
\end{example}
\begin{definition}
A differential  operator  $\mathscr{D}:\mathscr{A}^r\rightarrow\mathscr{A}^s$ is \textit{approximately degenerate} if there is a
nonzero differential  operator  $\mathscr{\tilde{D}}:\mathscr{A}^s\rightarrow\mathscr{A}$ such that  $\mathscr{\tilde{D}}\cdot\mathscr{D}\equiv o(\varepsilon^p)$
\end{definition}

Now, according to \cite{Olver1},  we are in a situation to state the main theorem on approximate bi-Hamiltonian
systems.

\begin{theorem}
\label{T:thm5}
Let
\[
u_t=K_1[u,\varepsilon]\approx\mathscr{D}\delta\mathscr{H}_1\approx\mathscr{E}\delta\mathscr{H}_0
\]
be an approximate bi-Hamiltonian system of perturbed evolution equations. Assume that the  operator $\mathscr{D}$
of the approximately Hamiltonian pair is approximate nondegenerate. Let $\mathscr{R} = \mathscr{E}\cdot\mathscr{D}^{-1}$ be the
corresponding approximate recursion operator, and let $K_0\approx\mathscr{D}\delta\mathscr{H}_0$. Assume that for each $n = 1,2,\ldots$
we can recursively define
\[
K_n\approx\mathscr{R}K_{n-1},\qquad n\geq 1,
\]
meaning that for each $n$, $K_{n-1}$ lies in the image of $\mathscr{D}$. Then there exists a
sequence of functionals $\mathscr{H}_0,\,\mathscr{H}_1,\,\mathscr{H}_2,\ldots $ such that
\begin{itemize}
\item[(i)] for each $n\geq 1$  the perturbed evolution equation
\begin{equation}
\label{eq:eq44}
u_t=K_n[u,\varepsilon]\approx\mathscr{D}\delta\mathscr{H}_n\approx\mathscr{E}\delta\mathscr{H}_{n-1}
\end{equation}
is an approximate bi-Hamiltonian system;
\item[(ii)] the corresponding approximate evolutionary vector fields $\mathbf{v}_n=\mathbf{v}_{K_n}$ all mutually
commute:
\[
[\mathbf{v}_n,\mathbf{v}_m]=o(\varepsilon^p),\qquad n,m\geq 0;
\]
\item[(iii)] the approximate Hamiltonian functionals $\mathscr{H}_n$ are all in involution with respect to either
Poisson bracket:
\begin{equation}
\label{eq:eq45}
\left\{\mathscr{H}_n,\mathscr{H}_n\right\}_{\mathscr{D}}=o(\varepsilon^p)=\left\{\mathscr{H}_n,\mathscr{H}_n\right\}_{\mathscr{E}},\qquad n,m\geq 0,
\end{equation}
and hence provide an infinite collection of approximate conservation laws for each of
the approximate bi-Hamiltonian systems (\ref{eq:eq42}).
\end{itemize}
\end{theorem}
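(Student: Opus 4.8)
The plan is to adapt the proof of the classical Magri theorem (Theorem 7.24 of \cite{Olver1}) to the approximate framework, establishing (i) by induction on $n$ and then deducing (ii) and (iii) from it by general bracket manipulations. The base case is the standing hypothesis $K_0\approx\mathscr{D}\delta\mathscr{H}_0$ together with the assumed bi-Hamiltonian form of $K_1$. For the inductive step, suppose a functional $\mathscr{H}_{n-1}$ has already been produced with $K_{n-1}\approx\mathscr{D}\delta\mathscr{H}_{n-1}$. The first, easy half of (i) is then immediate: since $\mathscr{R}=\mathscr{E}\cdot\mathscr{D}^{-1}$ and $\mathscr{D}$ is approximate nondegenerate,
\[
K_n\approx\mathscr{R}K_{n-1}=\mathscr{E}\,\mathscr{D}^{-1}\mathscr{D}\,\delta\mathscr{H}_{n-1}\approx\mathscr{E}\delta\mathscr{H}_{n-1},
\]
which is exactly the second Hamiltonian representation required in (\ref{eq:eq44}).

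The harder half is to produce a functional $\mathscr{H}_n$ with $K_n\approx\mathscr{D}\delta\mathscr{H}_n$. By the recursion hypothesis $K_n$ lies in the image of $\mathscr{D}$, so I may write $K_n\approx\mathscr{D}P_n$ for some $P_n\in\mathscr{A}^q$; what must be shown is that $P_n$ is, modulo $o(\varepsilon^p)$, a variational derivative, i.e. $P_n\approx\delta\mathscr{H}_n$ for some $\mathscr{H}_n\in\mathscr{F}$. By the Helmholtz-type exactness criterion (see \cite{Olver1}) this is equivalent to the approximate self-adjointness of the Fr\'echet derivative, $\mathbf{D}_{P_n}\approx\mathbf{D}_{P_n}^*$. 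This is the crux of the argument and the step I expect to be the \emph{main obstacle}: starting from the fact that $P_{n-1}\approx\delta\mathscr{H}_{n-1}$ is already a gradient and that $\mathscr{D}P_n\approx\mathscr{E}P_{n-1}$, I must transfer self-adjointness from $P_{n-1}$ to $P_n$, and the only structural input available is the approximately Hamiltonian-pair condition (\ref{eq:eq43}). I would therefore pass to the functional bi-vector formulation, in which the Jacobi identities for $\mathscr{D}$ and $\mathscr{E}$ and their compatibility read $\mathbf{pr\,v}_{\mathscr{D}\theta}(\Theta_\mathscr{D})=o(\varepsilon^p)$, $\mathbf{pr\,v}_{\mathscr{E}\theta}(\Theta_\mathscr{E})=o(\varepsilon^p)$ and (\ref{eq:eq43}), and use these to rewrite the skew-symmetric part of $\mathbf{D}_{P_n}$ in terms of the vanishing skew-symmetric part of $\mathbf{D}_{P_{n-1}}$, every computation being carried only modulo $o(\varepsilon^p)$. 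This is precisely the approximate analogue of the exactness lemma underlying Magri's theorem; once it is in place the induction closes and (i) holds for all $n$.

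With (i) established, part (iii) follows by a telescoping argument. Using $\mathscr{D}\delta\mathscr{H}_m\approx K_m\approx\mathscr{E}\delta\mathscr{H}_{m-1}$ together with the skew-adjointness of $\mathscr{D}$ and $\mathscr{E}$, the two Poisson brackets are linked by
\[
\{\mathscr{H}_n,\mathscr{H}_m\}_\mathscr{D}\approx\int\delta\mathscr{H}_n\cdot\mathscr{E}\delta\mathscr{H}_{m-1}\,dx\approx\{\mathscr{H}_{n+1},\mathscr{H}_{m-1}\}_\mathscr{D},
\]
where the second $\approx$ uses skew-adjointness of $\mathscr{E}$ and the relation $\mathscr{E}\delta\mathscr{H}_n\approx K_{n+1}\approx\mathscr{D}\delta\mathscr{H}_{n+1}$. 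Hence $\{\mathscr{H}_n,\mathscr{H}_m\}_\mathscr{D}$ depends, modulo $o(\varepsilon^p)$, only on $n+m$. Shifting the indices until they coincide (when $n+m$ is even) or become adjacent (when $n+m$ is odd), and invoking the skew-symmetry of the bracket — which forces $\{\mathscr{H}_k,\mathscr{H}_k\}_\mathscr{D}=0$ and $\{\mathscr{H}_k,\mathscr{H}_{k+1}\}_\mathscr{D}\approx-\{\mathscr{H}_k,\mathscr{H}_{k+1}\}_\mathscr{D}$ — yields $\{\mathscr{H}_n,\mathscr{H}_m\}_\mathscr{D}=o(\varepsilon^p)$ for all $n,m$. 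Running the same chain with the roles of $\mathscr{D}$ and $\mathscr{E}$ interchanged gives involution with respect to $\{\cdot,\cdot\}_\mathscr{E}$ as well, which is (\ref{eq:eq45}).

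Finally, (ii) is a consequence of (iii) together with the Noether correspondence (\ref{eq:eq30}). Each $\mathbf{v}_n=\mathbf{v}_{K_n}$ is the approximate Hamiltonian vector field $\hat{\mathbf{v}}_{\mathscr{H}_n}$ associated with $\mathscr{D}$, and the general identity relating the Lie bracket of Hamiltonian vector fields to the Poisson bracket of their generating functionals gives $[\mathbf{v}_n,\mathbf{v}_m]\approx\hat{\mathbf{v}}_{\{\mathscr{H}_n,\mathscr{H}_m\}_\mathscr{D}}$. Since $\{\mathscr{H}_n,\mathscr{H}_m\}_\mathscr{D}=o(\varepsilon^p)$ by (iii), its variational derivative, and hence the associated Hamiltonian vector field, is $o(\varepsilon^p)$, so $[\mathbf{v}_n,\mathbf{v}_m]=o(\varepsilon^p)$, completing the proof.
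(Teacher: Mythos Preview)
The paper does not actually supply a proof of this theorem: it merely introduces the statement with ``according to \cite{Olver1}'' and then moves on, relying implicitly on the fact that Olver's classical Magri theorem (Theorem~7.24 of \cite{Olver1}) carries over verbatim once every identity is read modulo $o(\varepsilon^p)$. Your proposal is exactly that adaptation --- induction on $n$ using the Helmholtz/self-adjointness criterion and the compatibility condition (\ref{eq:eq43}) to manufacture $\mathscr{H}_n$, the telescoping index-shift for involutivity, and the Lie-bracket/Poisson-bracket correspondence for commutation --- so it is correct and coincides with what the paper tacitly invokes.
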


We have seen that given an approximate bi-Hamiltonian system, the operator  $\mathscr{R}=\mathscr{E}\cdot\mathscr{D}^{-1}$,
when applied successively to the initial equation $K_0= \mathscr{D}\delta\mathscr{H}_0$, produces an
infinite sequence of approximate generalized symmetries of the original system (subject to
the technical assumptions contained in Theorem \ref{T:thm5}). It is still not clear that
$\mathscr{R}$ is a true approximate recursion operator for the system, in the sense that whenever $\mathbf{v}_Q$ is
an approximate generalized symmetry, so is $\mathbf{v}_{\mathscr{R}Q}$. So far,  we only know it for approximate symmetries with
$Q = K_n$ for some $n$. In order to establish this more general result, we need a
formula for the infinitesimal change of the approximate Hamiltonian operator itself under
a Hamiltonian flow.
\begin{lemma} Let $u_t = K \approx  \mathscr{D}\delta\mathscr{H}$ be an approximate Hamiltonian system of perturbed evolution
equations with corresponding vector field $\mathbf{v}_K=\mathbf{\hat{v}}_{\mathscr{H}}$. Then
\[
\mathbf{pr\,\hat{v}}_{\mathscr{H}}(\mathscr{D})\approx\mathbf{D}_K\cdot\mathscr{D}+\mathscr{D}\cdot\mathbf{D}_K^*
\]
\end{lemma}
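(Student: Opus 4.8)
The plan is to obtain the stated operator identity by differentiating the approximate Poisson bracket along the Hamiltonian flow and then removing the arbitrary variational derivatives that appear on both sides. Since $K\approx\mathscr{D}\delta\mathscr{H}$ is the characteristic of $\hat{\mathbf{v}}_{\mathscr{H}}$, we have $\mathbf{pr\,\hat{v}}_{\mathscr{H}}=\mathbf{pr\,v}_K$, and because $\mathscr{D}$ is an approximate Hamiltonian operator the bracket (\ref{eq:eq27}) satisfies the Jacobi identity modulo $o(\varepsilon^p)$. Rewriting the Jacobi identity by means of (\ref{eq:eq30}) and skew-symmetry, the first step is to record that the prolonged Hamiltonian vector field acts as a derivation on the bracket,
\[
\mathbf{pr\,v}_K(\{\mathscr{P},\mathscr{L}\})\approx\{\mathbf{pr\,v}_K(\mathscr{P}),\mathscr{L}\}+\{\mathscr{P},\mathbf{pr\,v}_K(\mathscr{L})\},
\]
valid for all functionals $\mathscr{P},\mathscr{L}\in\mathscr{F}$.

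I would then expand each side as an integral paired through $\mathscr{D}$. Writing $\{\mathscr{P},\mathscr{L}\}=\int\delta\mathscr{P}\cdot\mathscr{D}\delta\mathscr{L}\,dx$ and using that $\mathbf{pr\,v}_K$ is a derivation under the integral sign, the left-hand side splits into three pieces: two in which $\mathbf{pr\,v}_K$ falls on $\delta\mathscr{P}$ or $\delta\mathscr{L}$, and the decisive middle piece $\int\delta\mathscr{P}\cdot[\mathbf{pr\,v}_K(\mathscr{D})]\delta\mathscr{L}\,dx$ in which it differentiates the coefficients of $\mathscr{D}$. On the right-hand side each term is $\int\delta[\mathbf{pr\,v}_K(\mathscr{P})]\cdot\mathscr{D}\delta\mathscr{L}\,dx$ together with its symmetric companion. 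To reconcile the $\mathbf{pr\,v}_K(\delta\mathscr{P})$ and $\delta[\mathbf{pr\,v}_K(\mathscr{P})]$ contributions I would establish the variational identity
\[
\delta[\mathbf{pr\,v}_K(\mathscr{P})]\approx\mathbf{pr\,v}_K(\delta\mathscr{P})+\mathbf{D}_K^*(\delta\mathscr{P}).
\]
This follows by writing $\mathbf{pr\,v}_K(\mathscr{P})=\int\delta\mathscr{P}\cdot K\,dx$, applying the Euler operator through the product rule $\mathbf{E}(A\cdot B)=\mathbf{D}_A^*(B)+\mathbf{D}_B^*(A)$, and invoking both the self-adjointness of the Fr\'echet derivative $\mathbf{D}_{\delta\mathscr{P}}$ of a variational derivative and the relation (\ref{eq:eq39}), all read modulo $o(\varepsilon^p)$.

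Substituting this identity, the $\mathbf{pr\,v}_K(\delta\mathscr{P})$ and $\mathbf{pr\,v}_K(\delta\mathscr{L})$ terms cancel against their counterparts on the two sides, leaving
\[
\int\delta\mathscr{P}\cdot[\mathbf{pr\,v}_K(\mathscr{D})]\delta\mathscr{L}\,dx\approx\int[\mathbf{D}_K^*(\delta\mathscr{P})]\cdot\mathscr{D}\delta\mathscr{L}\,dx+\int\delta\mathscr{P}\cdot\mathscr{D}[\mathbf{D}_K^*(\delta\mathscr{L})]\,dx.
\]
Integrating the first term on the right by parts to move $\mathbf{D}_K^*$ onto $\mathscr{D}\delta\mathscr{L}$, and using $(\mathbf{D}_K^*)^*=\mathbf{D}_K$, converts it into $\int\delta\mathscr{P}\cdot[\mathbf{D}_K\cdot\mathscr{D}]\delta\mathscr{L}\,dx$, while the second term is already $\int\delta\mathscr{P}\cdot[\mathscr{D}\cdot\mathbf{D}_K^*]\delta\mathscr{L}\,dx$. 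As $\mathscr{P}$ and $\mathscr{L}$ run over all functionals, the gradients $\delta\mathscr{P}$ and $\delta\mathscr{L}$ are effectively arbitrary, so the bilinear pairing is nondegenerate and the integrands may be equated, giving $\mathbf{pr\,\hat{v}}_{\mathscr{H}}(\mathscr{D})\approx\mathbf{D}_K\cdot\mathscr{D}+\mathscr{D}\cdot\mathbf{D}_K^*$.

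The step I expect to be the main obstacle is the variational identity for $\delta[\mathbf{pr\,v}_K(\mathscr{P})]$, together with the final removal of the arbitrary gradients: one must argue that the variational derivative produces a sufficiently rich family of test expressions to force equality of the two differential operators, and that every integration by parts discards only boundary contributions that vanish under the standing decay hypotheses. Threaded through all of this is the need to verify that each manipulation---especially the derivation property inherited from the approximate Jacobi identity---degrades the equality by no worse than $o(\varepsilon^p)$, which is the bookkeeping that demands the most care.
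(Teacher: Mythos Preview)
The paper does not actually supply a proof of this lemma; it is stated without argument, immediately before Theorem~\ref{T:thm6}, with the tacit understanding that the unperturbed version is Lemma~7.22 of \cite{Olver1} and that the approximate analogue follows by the same computation carried modulo $o(\varepsilon^p)$. Your proposal reproduces precisely that standard argument: the derivation property from the approximate Jacobi identity, the variational identity $\delta[\mathbf{pr\,v}_K(\mathscr{P})]\approx\mathbf{pr\,v}_K(\delta\mathscr{P})+\mathbf{D}_K^*(\delta\mathscr{P})$, and the removal of test gradients via integration by parts. This is correct and is exactly what the paper is implicitly relying on; your identification of the variational identity and the nondegeneracy of the pairing as the points requiring care is apt, and both are handled in \cite{Olver1} (Proposition~5.24 and the surrounding discussion) in a form that carries over directly to the perturbed setting.
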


\begin{theorem}
\label{T:thm6}
 Let $u_t = K \approx  \mathscr{D}\delta\mathscr{H}_1\approx \mathscr{E}\delta\mathscr{H}_0$ be an approximate bi-Hamiltonian system of perturbed
evolution equations. Then the operators $\mathscr{R}_l=\varepsilon^l\mathscr{E}\cdot\mathscr{D}^{-1},\,0\leq l\leq p,$ are approximate  recursion operators for
the system.
\end{theorem}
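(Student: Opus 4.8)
The plan is to reduce the statement to the single verification that $\mathscr{R}=\mathscr{E}\cdot\mathscr{D}^{-1}$ satisfies the commutator criterion (\ref{eq:eq41}), and then to invoke the observation recorded immediately after (\ref{eq:eq41}) to pass from $\mathscr{R}$ to the whole family $\mathscr{R}_l=\varepsilon^l\mathscr{R}$. Indeed, once we know $\mathscr{R}_t\approx[\mathbf{D}_K,\mathscr{R}]$, multiplying through by $\varepsilon^l$ and using $\varepsilon^l[\mathbf{D}_K,\mathscr{R}]\approx[\mathbf{D}_K,\varepsilon^l\mathscr{R}]$ gives $(\varepsilon^l\mathscr{R})_t\approx[\mathbf{D}_K,\varepsilon^l\mathscr{R}]$ for every $0\leq l\leq p$ with $\varepsilon^l\mathscr{R}\neq 0$, so each such $\mathscr{R}_l$ is an approximate recursion operator by Theorem \ref{T:thm4}. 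Thus the entire weight of the proof rests on establishing (\ref{eq:eq41}) for $\mathscr{R}=\mathscr{R}_0$.

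To verify (\ref{eq:eq41}), I would first compute the $t$-derivative of $\mathscr{R}$ along the flow $u_t=K$. Since the coefficients of $\mathscr{D}$ and $\mathscr{E}$ are differential functions, differentiating in $t$ amounts to applying the common Hamiltonian field $\mathbf{pr\,v}_K$ to those coefficients, so by the Leibniz rule for the Lie derivative of a composition of operators,
\begin{equation*}
\mathscr{R}_t=\mathbf{pr\,v}_K(\mathscr{E}\cdot\mathscr{D}^{-1})=\mathbf{pr\,v}_K(\mathscr{E})\cdot\mathscr{D}^{-1}-\mathscr{E}\cdot\mathscr{D}^{-1}\cdot\mathbf{pr\,v}_K(\mathscr{D})\cdot\mathscr{D}^{-1},
\end{equation*}
where the second term arises from differentiating $\mathscr{D}\cdot\mathscr{D}^{-1}\approx\mathrm{Id}$. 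The decisive input is the preceding Lemma: because the system is approximately bi-Hamiltonian, $\mathbf{v}_K$ coincides with the Hamiltonian field $\mathbf{\hat{v}}_{\mathscr{H}_1}$ relative to $\mathscr{D}$ and with $\mathbf{\hat{v}}_{\mathscr{H}_0}$ relative to $\mathscr{E}$, so the Lemma applies to each operator and yields
\begin{equation*}
\mathbf{pr\,v}_K(\mathscr{D})\approx\mathbf{D}_K\cdot\mathscr{D}+\mathscr{D}\cdot\mathbf{D}_K^*,\qquad\mathbf{pr\,v}_K(\mathscr{E})\approx\mathbf{D}_K\cdot\mathscr{E}+\mathscr{E}\cdot\mathbf{D}_K^*.
\end{equation*}

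Substituting these two expressions and using $\mathscr{D}\cdot\mathscr{D}^{-1}\approx\mathscr{D}^{-1}\cdot\mathscr{D}\approx\mathrm{Id}$, the two $\mathbf{D}_K^*$ contributions are exactly $\mathscr{E}\cdot\mathbf{D}_K^*\cdot\mathscr{D}^{-1}$ and $-\mathscr{E}\cdot\mathbf{D}_K^*\cdot\mathscr{D}^{-1}$, which cancel; what survives is
\begin{equation*}
\mathscr{R}_t\approx\mathbf{D}_K\cdot\mathscr{E}\cdot\mathscr{D}^{-1}-\mathscr{E}\cdot\mathscr{D}^{-1}\cdot\mathbf{D}_K=\mathbf{D}_K\cdot\mathscr{R}-\mathscr{R}\cdot\mathbf{D}_K=[\mathbf{D}_K,\mathscr{R}],
\end{equation*}
which is precisely (\ref{eq:eq41}). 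This establishes that $\mathscr{R}$ is an approximate recursion operator, and the reduction of the first paragraph then finishes the proof.

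I expect the main obstacle to be the careful control of the remainder terms and of the nonlocal operator $\mathscr{D}^{-1}$. Two points need genuine attention. First, the Leibniz rule for $\mathbf{pr\,v}_K$ acting on a composition involving $\mathscr{D}^{-1}$, together with the identity $\mathbf{pr\,v}_K(\mathscr{D}^{-1})=-\mathscr{D}^{-1}\cdot\mathbf{pr\,v}_K(\mathscr{D})\cdot\mathscr{D}^{-1}$, must be justified at the level of formal operators, where $\mathscr{D}^{-1}$ is only a formal integral operator; here the approximate nondegeneracy of $\mathscr{D}$ assumed in the bi-Hamiltonian hypothesis is exactly what lets us treat $\mathscr{D}^{-1}$ consistently. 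Second, every equality above holds only modulo $o(\varepsilon^p)$, so one must check that composing an $o(\varepsilon^p)$ remainder with the (possibly $\varepsilon$-dependent) operators $\mathscr{D}^{-1}$, $\mathscr{E}$ and $\mathbf{D}_K$ does not lower the order in $\varepsilon$. Since these operators have coefficients regular in $\varepsilon$, the product of an $o(\varepsilon^p)$ remainder with any of them remains $o(\varepsilon^p)$, so the cancellation and the final commutator identity are valid to the required order.
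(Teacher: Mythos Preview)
Your proof is correct and follows precisely the route the paper sets up: the paper states the theorem without an explicit proof, but the Lemma placed immediately before it (the identity $\mathbf{pr\,\hat{v}}_{\mathscr{H}}(\mathscr{D})\approx\mathbf{D}_K\cdot\mathscr{D}+\mathscr{D}\cdot\mathbf{D}_K^*$) is exactly the ingredient you invoke, and the passage from $\mathscr{R}_0$ to $\mathscr{R}_l=\varepsilon^l\mathscr{R}_0$ is the observation the paper records right after (\ref{eq:eq41}). Your argument is the standard Olver proof (Theorem 7.27 of \cite{Olver1}) transported to the approximate setting, which is evidently what the authors intend.
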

Judging from $\mathscr{R}_l^p=o(\varepsilon^p)$, when $l\neq 0$, this type of  approximate
recursion operators have less significance than $\mathscr{R}_0$.

\begin{example}
The approximate recursion operators of the  Gardner equation are
\[
\mathscr{R}_0=\mathscr{E}\cdot\mathscr{D}^{-1}=4u+2u_xD_x^{-1}+3\varepsilon(uu_xD_x^{-1}+u^2)-D_x^2,\qquad
\mathscr{R}_1=\varepsilon(4u+2u_xD_x^{-1}-D_x^2)
\]
and we can apply $\mathscr{R}_0$ to the right-hand side of the Gardner equation to obtain the
approximate symmetries. The first step in this recursion is the flow
\[
u_t\approx\mathscr{E}\delta\mathscr{H}_1\approx\mathscr{D}\delta\mathscr{H}_2
\approx u_{xxxxx}-10\,u\,u_{xxx}-20\,u_x\,u_{xx}+30\,u^2\,u_x+\varepsilon
\Big(55u^3u_x-39uu_xu_{xx}-9u^2u_{xxx}-12u_x^3 \Big),
\]
which is not approximately total derivative, so we can not re-apply the approximate recursion operator to get
a meaningful approximate generalized symmetry.

But if we  set
\[
\bar{K}_1[u,\varepsilon]=Q_5=\varepsilon K_1[u,\varepsilon]=\varepsilon(6uu_x-u_{xxx}),\qquad\bar{\mathscr{H}_0}=\tilde{\mathscr{P}_5}=\varepsilon\mathscr{H}_0,\qquad\bar{\mathscr{H}_1}=\mathscr{P}_5=\varepsilon\mathscr{H}_1,
\]
then we can apply $\mathscr{R}_0$ successively to  $\bar{K}_1$  in order to obtain the
approximate symmetries. The first phase become
\[
u_t\approx\mathscr{E}\delta\bar{\mathscr{H}_1}\approx\mathscr{D}\delta\bar{\mathscr{H}_2}
\approx\mathscr{R}_0\bar{K}_1\approx\varepsilon( u_{xxxxx}-10\,u\,u_{xxx}-20\,u_x\,u_{xx}+30\,u^2\,u_x)
\]
in which
\[
\bar{\mathscr{H}_2}=\bar{\mathscr{P}_5}=\frac{\varepsilon}{2}\int(u_{xx}^2-5\,u^2\,u_{xx}+5\,u^4)\,dx
\]
is another approximate conservation law.

Now, for $\bar{K}_2=\mathscr{R}_0\bar{K}_1$ we have
\[
u_t\approx\mathscr{E}\delta\bar{\mathscr{H}_2}\approx\mathscr{D}\delta\bar{\mathscr{H}_3}
\approx\mathscr{R}_0\bar{K}_2
\approx\varepsilon( -u_{xxxxxxx}+14\,uu_{xxxxx}+42u_xu_{xxxx})
\]
\[
\,+70\,\varepsilon(u_{xx}u_{xxx}-u^2u_{xxx}+2u^3u_x-4uu_xu_{xx}-u_x^3)
\]
where\[
\bar{\mathscr{H}_3}=7\varepsilon\int(\frac{u_{xxx}^2}{14}+uu_{xx}^2+5u^2\,u_x^2+u^5)\,dx
\]
is a further approximate conservation law.
\end{example}

\section{Concluding Remarks}
Sometimes, differential equations appearing in mathematical modelings are written with terms
involving a small parameter which is known as the perturbed term. Taking into account the
instability of the Lie point symmetries with respect to perturbation of coefficients of differential
equations, the approximate (perturbed) symmetries for such equations are obtained.
Different methods for computing the approximate symmetries of a system of differential equations are available
in the literature \cite{Baikov1,Baikov2,Fush}.\\
The approximate symmetry method proposed by Fushchich and Shtelen \cite{Fush} is based on a perturbation of dependent variables.
This method has so many advantages such as producing more approximate group-invariant solutions, consistence with the perturbation theory, solving singular perturbation problems \cite{Pak,Wilst1} and close relationship  with approximate homotopy symmetry method \cite{zhang1}.
But despite above mentioned benefits, this procedure converts a perturbed evolution equation to an equivalent perturbed evolutionary system.
In his case, obtaining the corresponding Hamiltonian formulation will be hard. Due to increase of the
dimensions of Hamiltonian operators $\mathscr{D},\,\mathscr{E}$, computation of the
approximate recursion operator $\mathscr{R}=\mathscr{E}\cdot\mathscr{D}^{-1}$ is difficult.\\
Since prolongation and Fr\'{e}chet derivative of vector fields are linear, both of the
approximate symmetry methods can be extended on the Hamiltonian structures. But due to the significance of
vector fields in Hamiltonian and bi-Hamiltonian systems, the approximate symmetry method proposed by Baikov, Gazizov
and Ibragimov \cite{Baikov1,Baikov2} seems to be more consistent.
\section{Acknowledgements}
It is a pleasure to thank the anonymous referees for their constructive suggestions and helpful comments
which have improved the presentation of the paper. The authors wish to express their sincere gratitude to
Fatemeh Ahangari for her useful advise and suggestions.

\end{document}